\newtheorem{theorem}{Theorem}
\newtheorem{lemma}{Lemma}
\newtheorem{proposition}{Proposition}
\newtheorem{corollary}{Corollary}
\title{Equitable 2-partitions of the Hamming graphs with the second eigenvalue}
\author{Ivan Mogilnykh, Alexandr Valyuzhenich %
  \thanks{E-mail address: \texttt{ivmog@math.nsc.ru, graphkiper@mail.ru}}}
\affil{Sobolev Institute of Mathematics, pr. Akademika Koptyuga 4,
Novosibirsk 630090, Russia}
\begin{document}

\maketitle

\begin{abstract}
The eigenvalues of the Hamming graph $H(n,q)$ are known to be
$\lambda_i(n,q)=(q-1)n-qi$, $0\leq i \leq n$. The characterization
of equitable 2-partitions of the Hamming graphs $H(n,q)$ with
eigenvalue $\lambda_{1}(n,q)$ was obtained by Meyerowitz in
\cite{Mey}. We study the equitable 2-partitions of $H(n,q)$ with
eigenvalue $\lambda_{2}(n,q)$. We show that these partitions are
reduced to equitable 2-partitions of $H(3,q)$ with eigenvalue
$\lambda_{2}(3,q)$ with exception of two constructions.

%We study the equitable 2-partitions of $H(n,q)$ with eigenvalue
%$\lambda_{2}(n,q)=n(q-1)-2q$ and show that they are equitable
%2-partitions of $H(3,q)$ with exception of two constructions.

\end{abstract}

\section{Introduction}
An ordered $r$-partition $(C_1,\ldots, C_{r})$ of the vertex set
of a graph is called {\it equitable} if  for any $i,j\in
\{1,\ldots,r\}$ there is $S_{ij}$ such that any vertex of $C_i$
has exactly $S_{ij}$ neighbors in $C_j$. The elements of the
partition are called {\it cells}. It is well-known (see e.g.
\cite{CDZ}) that the eigenvalues of the matrix $S=(S_{ij})_{i,j\in
\{1,\ldots,r\}}$, which is called the {\it quotient matrix} of the
equitable partition, are necessarily eigenvalues of the adjacency
matrix of the  graph. {\it An eigenvalue} of an equitable
partition is an eigenvalue of its quotient matrix.

Equitable partitions of various graphs are solutions for certain
covering and optimization problems in coding, graph and design
theories, projective geometry and etc. Such objects
 as
1-perfect codes, $(w-1)-(n,w,\lambda)$-designs \cite{Martin} and
their q-analogues, including $q$-ary Steiner triple systems
\cite{Bra}, spreads and Cameron-Liebler line classes in $PG(n,q)$
\cite{CL} can be defined in terms of equitable 2-partitions of
Hamming, Johnson and Grassman graphs.

A subset of the vertex set of the graph is called a {\it
completely regular code} if the distance partition with respect to
the subset is equitable. Obviously any cell of an equitable
2-partition is a completely regular code. Completely regular codes
in Hamming graphs include Preparata, some BCH and perfect codes.
For a survey on completely regular codes in Hamming graphs we
refer to \cite{ZinRifBorg}.

In throughout the paper we index the eigenvalues of the graphs in
the descending order, starting with zeroth. The eigenvalues of the
Hamming graph $H(n,q)$ are known to be $\lambda_i(n,q)=(q-1)n-qi$,
$i\in \{ 0,\ldots,n\}$, see e.g. \cite{BCN}.
 The characterization of all completely regular codes (in
particular equitable 2-partitions) with the first eigenvalue was
obtained for the Hamming and Johnson graphs by Meyerowitz in
\cite{Mey}. In the paper we consider the case when equitable
2-partitions of the Hamming graphs have the second eigenvalue.

In binary case the equitable 2-partitions were studied by
Fon-Der-Flaas \cite{FDF1,FDF2}. Consider an equitable 2-partition
of the Hamming graph $H(n,2)$ with an asymmetric quotient matrix
and eigenvalues $\lambda_i(n,2)$ and $n$. In \cite{FDF2}
Fon-Der-Flaas showed that the number $i$ of the minimum eigenvalue
is not greater than $2n/3$. Later in \cite{FDF3} he constructed
 2-partitions which eigenvalue numbers attain the above bound.
 Krotov and Vorob'ev \cite{KV} studied the existence problem of such
 partitions and obtained a new necessary condition in terms of elements of quotient matrices. They also gave a
 characterization of such partitions in $H(12,2)$ with only one open case left. The database of equitable 2-partitions of binary Hamming
 graphs can be found in \cite{KKM}.

 In case of arbitrary $q$, the equitable 2-partitions of $H(n,q)$ with the first
eigenvalue could be described as those that can be obtained from
2-partitions of $H(1,q)$ by adding $n-1$ nonessential coordinate
positions \cite{Mey}. The case of linear equitable 2-partitions,
i.e. such that the cells are cosets by a linear code of dimension
$n-1$ were characterized in \cite{BRZlin} (see also \cite{KLMT}).
Note that these partitions are related to Hamming codes and
therefore generally have numbers of their eigenvalues exceeding 2.

In the paper \cite{V1} every eigenvector of $H(n,q)$ with
eigenvalue $\lambda_i(n,q)$ was related to a set of eigenvectors
of $H(n-1,q)$ with eigenvalue $\lambda_{i-1}(n-1,q)$ in a certain
manner. Based on this connection the minimum weights of the
eigenvectors of $H(n,q)$ with eigenvalue $\lambda_1(n,q)$
\cite{V1} and arbitrary eigenvalue \cite{V2} were found. In case
of equitable partitions the approach of \cite{V1} relates every
equitable 2-partition of $H(n,q)$ with eigenvalue $\lambda_i(n,q)$
to a set of eigenvectors of $H(n-1,q)$ with eigenvalue
$\lambda_{i-1}(n-1,q)$ whose entries take values in the set
$\{0,1,-1\}$. We characterize all eigenvectors with eigenvalue
$\lambda_{1}(n-1,q)$ of the Hamming graph $H(n-1,q)$ taking values
$\{0,1,-1\}$ in Section \ref{sec:main}. The description of these
vectors impose restrictions on the structure of the parent
equitable 2-partition of $H(n,q)$ with eigenvalue
$\lambda_2(n,q)$, which are shown to be equitable 2-partitions of
$H(3,q)$ in Section \ref{sec:main} up to several constructions
that we give in Section \ref{sec:const}. The basic theory
concerning equitable partitions of the Hamming graphs and the
details of the approach from \cite{V1} are given in Section
\ref{sec:2}.

\section{Equitable partitions of the Hamming graphs}\label{sec:2}

Let $B_1,\ldots, B_n$ be finite sets, $x$ and $y$ be two tuples
from the cartesian product $B_1\times\ldots\times B_n$. We say
that $x$ is $s$-{\it adjacent} to $y$ if $x$ and $y$ differ only
in $s$th coordinate. The vertex set of the {\it Hamming graph}
$H(n,q)$ is the cartesian $n$th power of a set ${\cal A}$ of size
$q$ and vertices $x$ and $y$ are adjacent if they are differ in
exactly one coordinate position. % In other words, the Hamming graph
%$H(n,q)$ is the $n$th cartesian power of the complete graph $K_q$.
Throughout the paper, ${\cal N}$ denotes the set $\{1,\ldots,
n\}$.

A function on the vertex set of a graph is called a $\lambda$-{\it
eigenfunction} if the  vector of its values is an eigenvector of
the adjacency matrix of the graph with eigenvalue $\lambda$ or the
all-zero vector.

Given a code (a set of vertices of a graph) $C$, by  $\chi_C$ we
denote the characteristic function of the code in the vertex set
of the graph.  Let $(u_1,\ldots,u_r)$ be an eigenvector of the
quotient matrix $S$ of an equitable partition $(C_1,\ldots,C_r)$
with eigenvalue $\lambda$. Then it is easy to see that the
function $\sum_{i\in \{1,\ldots,r\}} u_i\chi_{C_i}$ is a
$\lambda$-eigenfunction of the adjacency matrix of the graph,
which is known as Lloyd's theorem.

\begin{theorem}\cite{CDZ}
An eigenvalue of an equitable partition of a graph is an
eigenvalue of the graph.
\end{theorem}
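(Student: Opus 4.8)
The plan is to exhibit, for each eigenvalue $\lambda$ of the quotient matrix $S$, a genuine $\lambda$-eigenvector of the adjacency matrix, using precisely the construction recalled just above the statement (Lloyd's theorem). First I would fix an eigenvector $u=(u_1,\ldots,u_r)$ of $S$ with $Su=\lambda u$, and form the function $f=\sum_{i\in\{1,\ldots,r\}} u_i\chi_{C_i}$ on the vertex set of the graph, which is constant with value $u_i$ on each cell $C_i$.

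The main computation is to verify that $f$ is a $\lambda$-eigenfunction of the adjacency matrix $A$. For this I would evaluate $(Af)(v)$ at an arbitrary vertex $v$ and invoke the defining property of an equitable partition. If $v\in C_i$, then every neighbor of $v$ lies in some cell $C_j$, and $v$ has exactly $S_{ij}$ neighbors in $C_j$. Grouping the neighbors of $v$ according to the cell containing them therefore gives $(Af)(v)=\sum_{j} S_{ij} u_j = (Su)_i = \lambda u_i = \lambda f(v)$, where the last equality uses that $f$ equals $u_i$ on $C_i$. Since $v$ was arbitrary, $Af=\lambda f$ holds pointwise on the whole vertex set.

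The only remaining point, and the one place where a little care is needed, is to rule out the degenerate possibility $f\equiv 0$: the notion of $\lambda$-eigenfunction above explicitly admits the all-zero vector, so the identity $Af=\lambda f$ by itself does not yet certify that $\lambda$ is an eigenvalue of $A$. Here I would use that $(C_1,\ldots,C_r)$ is a partition, so the cells are nonempty and pairwise disjoint; consequently the indicators $\chi_{C_1},\ldots,\chi_{C_r}$ have disjoint supports and are linearly independent. Hence $f=\sum_i u_i\chi_{C_i}=0$ would force $u=0$, contradicting that $u$ is an eigenvector of $S$. Thus $f$ is a nonzero vector with $Af=\lambda f$, and $\lambda$ is an eigenvalue of the graph. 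I do not expect any serious obstacle: the whole content is the one-line neighbor-counting identity $(Af)(v)=(Su)_i$, and the argument closes once this is paired with the linear independence of the cell characteristic functions.
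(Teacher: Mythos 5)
Your proof is correct and follows essentially the same route the paper has in mind: the paper does not give its own proof (the theorem is cited from \cite{CDZ}), but the construction $f=\sum_i u_i\chi_{C_i}$ and the neighbor-counting identity are exactly the Lloyd's-theorem remark stated immediately before the theorem. Your additional step ruling out $f\equiv 0$ via the disjoint supports of the $\chi_{C_i}$ is a worthwhile detail that the paper's ``it is easy to see'' glosses over.
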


Let $(C,\overline{C})$ be an equitable 2-partition of a
$k$-regular graph $G$ with the quotient matrix $S$. Then it is
easy to see that the eigenvalues of $S$ are $k$ and
$S_{11}-S_{21}$. Moreover, the considerations prior to  Lloyd's
theorem imply that $\chi_{C}$ is the sum of a $k$-eigenfunction
(which is a constant uniquely defined by $S$) and a two-valued
$(S_{11}-S_{21})$-eigenfunction of $G$ that is constant on $C$ and
$\overline{C}$. It is well-known that if the characteristic
function of a set in $H(n,q)$ is orthogonal to any
$\lambda_j(n,q)$-eigenfunction, for all $j$ such that $1\leq j\leq
t$, then the code is a $t-$orthogonal array (see e.g.
\cite{Del73}[Theorem 4.4]). In particular, in the case when
$(C,\overline{C})$ is an equitable 2-partition of $H(n,q)$ with
the second eigenvalue, the cell $C$ is evenly distributed in the
induced Hamming subgraphs $H(n-1,q)$.

\begin{proposition}\label{prop_orth_arr}
Let $(C,\overline{C})$ be an equitable 2-partition of a
$k$-regular graph $G$ with the quotient matrix $S$. Then the following statements hold:

1. $|C|=|V(G)|S_{21}/(S_{12}+S_{21})$ and the eigenvalues of the
partition are\\ $k, S_{11}-S_{21}$.

2. If $G$ is $H(n,q)$ and $S_{11}-S_{21}=\lambda_2(n,q)$, then for
any $i\in {\cal N}$, $\alpha\in {\cal A}$ we have that
$$|\{x\in C:x_i=\alpha\}|=S_{21}q^{n-2}/2.$$
\end{proposition}
\begin{proof}
Double counting of edges in $G$ between $C$ and $\overline{C}$
gives that $|C|=\frac{S_{21}}{S_{12}+S_{21}}|V(G)|$. When $G=H(n,q)$ and
$S_{11}-S_{21}=\lambda_2(n,q)=(q-1)n-2q$ we have
$S_{11}+S_{12}=n(q-1)$ which gives $|C|=S_{21}q^{n-1}/2$. Since
$C$ is a 1-orthogonal array, $|\{x\in C:x_i=\alpha\}|$ is a
constant number and we obtain the required.
\end{proof}

% Note that it is natural to attack equitable
%2-partitions with large eigenvalues as they correspond to the case
%when $S_{12}$ and $S_{21}$ are small, see e.g. \cite{lowbound} ?
%more.

Let $f$ be a function defined on the vertices ${\cal A}^n$ of
$H(n,q)$, $\alpha \in {\cal A}$, $i\in {\cal N}$. Let
$(f)_i^{\alpha}$ be the function such that $(f)_i^{\alpha}(y')$ is
$f(y)$, where $y_i=\alpha$ and $y'$ is obtained from $y$ by
deleting $i$-th coordinate position. Denote by $U_i(n,q)$ the
space of $\lambda_i(n,q)$-eigenfunctions of $H(n,q)$.

\begin{lemma}\label{lem_smalred}\cite{V1}
 Let $f$ be a $\lambda_i(n,q)$-eigenfunction of $H(n,q)$. Then the following statements hold:

 1. For any $k \in {\cal N}$, $\alpha,\alpha' \in {\cal A}$ the function
$(f)_{k}^{\alpha}-(f)_k^{\alpha'}$ is
$\lambda_{i-1}(n-1,q)$-eigenfunction of $H(n-1,q)$.

2. For any $k\in {\cal N}$ and $\alpha\in {\cal A}$ the function
$(f)_{k}^{\alpha}$ is in $U_{i-1}(n-1,q)\oplus U_{i}(n-1,q)$.

\end{lemma}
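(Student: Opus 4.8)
The plan is to fix a coordinate $k$ and split the adjacency action of $H(n,q)$ into the part that stays within a slice $\{y_k=\alpha\}$ and the part that crosses between slices. Let $A$ and $A'$ denote the adjacency operators of $H(n,q)$ and $H(n-1,q)$. For a vertex $y$ with $y_k=\alpha$ and projection $y'$ (obtained by deleting the $k$-th coordinate), its neighbours are of two kinds: the $q-1$ vertices obtained by replacing $\alpha$ with some $\beta\neq\alpha$, which project to the same $y'$ in the sibling slices, and the $H(n-1,q)$-neighbours of $y'$, which keep $y_k=\alpha$. Writing $g_\alpha=(f)_k^{\alpha}$ for the slice functions on $H(n-1,q)$, the eigenvalue equation $Af=\lambda f$ with $\lambda=\lambda_i(n,q)$ becomes, for each $\alpha$,
\[
A'g_\alpha=\lambda g_\alpha-\sum_{\beta\neq\alpha}g_\beta=(\lambda+1)g_\alpha-G,
\]
where $G=\sum_{\beta\in{\cal A}}g_\beta$.

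For part 1, I would subtract the identity for $\alpha'$ from the one for $\alpha$; the term $G$ cancels and I obtain $A'(g_\alpha-g_{\alpha'})=(\lambda+1)(g_\alpha-g_{\alpha'})$. It then remains to check the index arithmetic: a direct computation gives $\lambda_i(n,q)+1=(q-1)n-qi+1=\lambda_{i-1}(n-1,q)$, so $g_\alpha-g_{\alpha'}=(f)_k^{\alpha}-(f)_k^{\alpha'}$ is a $\lambda_{i-1}(n-1,q)$-eigenfunction (possibly the zero function), as claimed.

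For part 2, I would sum the displayed identity over all $\alpha\in{\cal A}$. The right-hand side becomes $(\lambda+1)G-qG$, yielding $A'G=(\lambda+1-q)G$; checking indices again, $\lambda_i(n,q)+1-q=(q-1)(n-1)-qi=\lambda_i(n-1,q)$, so $G\in U_i(n-1,q)$. Finally I write each slice as
\[
g_\alpha=\tfrac1q G+\tfrac1q\sum_{\beta\in{\cal A}}(g_\alpha-g_\beta),
\]
where the first summand lies in $U_i(n-1,q)$ and, by part 1, each difference $g_\alpha-g_\beta$ lies in $U_{i-1}(n-1,q)$, so the second summand lies in $U_{i-1}(n-1,q)$. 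Since $\lambda_{i-1}(n-1,q)\neq\lambda_i(n-1,q)$ the two eigenspaces meet only in zero, and this exhibits $(f)_k^{\alpha}$ as an element of $U_{i-1}(n-1,q)\oplus U_i(n-1,q)$.

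The computations here are short; the only real care needed is in the bookkeeping of the two arithmetic identities relating the eigenvalue labels at levels $n$ and $n-1$, and in correctly identifying which neighbours of $y$ cross slices versus stay within a slice when setting up the split of the adjacency operator. Everything else is linear algebra on the slice functions $g_\alpha$.
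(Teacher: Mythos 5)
Your proof is correct: the slice decomposition $A'g_\alpha=(\lambda+1)g_\alpha-G$, the cancellation of $G$ in differences, the identity $A'G=(\lambda+1-q)G$, and both index checks $\lambda_i(n,q)+1=\lambda_{i-1}(n-1,q)$ and $\lambda_i(n,q)+1-q=\lambda_i(n-1,q)$ all hold, and writing $g_\alpha=\tfrac1q G+\tfrac1q\sum_{\beta}(g_\alpha-g_\beta)$ cleanly yields part 2. The paper itself gives no proof of this lemma (it is quoted from the reference \cite{V1}), and your argument is essentially the standard one used there, so there is nothing further to reconcile.
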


\begin{corollary}\label{coro_1}
Let $(C,\overline{C})$ be an equitable 2-partition of $H(n,q)$
with eigenvalue $\lambda_i(n,q)$ and $k \in {\cal N}$, $\alpha,\beta \in {\cal A}$. Then the function
$(\chi_{C})_{k}^{\alpha}-(\chi_{C})_{k}^{\beta}$ is a
$\lambda_{i-1}(n-1,q)$-eigenfunction of $H(n-1,q)$ taking values
$\{0,1,-1\}$.
\end{corollary}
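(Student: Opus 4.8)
The plan is to reduce the statement to part~1 of Lemma~\ref{lem_smalred} via the additive decomposition of $\chi_C$ recorded in the discussion preceding Proposition~\ref{prop_orth_arr}. Since $H(n,q)$ is $n(q-1)$-regular and the partition $(C,\overline{C})$ has eigenvalue $\lambda_i(n,q)$, that discussion gives $\chi_C = c\cdot\mathbf{1} + g$, where $c\cdot\mathbf{1}$ is the constant function uniquely determined by $S$ and $g$ is a two-valued $\lambda_i(n,q)$-eigenfunction of $H(n,q)$.

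The key observation is that the operation $f \mapsto (f)_k^{\alpha} - (f)_k^{\beta}$ is linear in $f$ and annihilates constants: directly from the definition of $(\cdot)_k^{\alpha}$, restricting to the hyperplane $y_k=\alpha$ and deleting coordinate $k$ sends the constant function $c\cdot\mathbf{1}$ on $H(n,q)$ to the constant function $c\cdot\mathbf{1}$ on $H(n-1,q)$, independently of $\alpha$; hence $(c\cdot\mathbf{1})_k^{\alpha} - (c\cdot\mathbf{1})_k^{\beta} = 0$. Applying this to $\chi_C = c\cdot\mathbf{1} + g$ therefore yields
$$(\chi_C)_k^{\alpha} - (\chi_C)_k^{\beta} = (g)_k^{\alpha} - (g)_k^{\beta}.$$
Now part~1 of Lemma~\ref{lem_smalred}, applied to the $\lambda_i(n,q)$-eigenfunction $g$, shows that the right-hand side is a $\lambda_{i-1}(n-1,q)$-eigenfunction of $H(n-1,q)$; consequently so is the left-hand side.

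It remains to pin down the values. Since $\chi_C$ takes values in $\{0,1\}$, each restriction $(\chi_C)_k^{\alpha}$ is itself the characteristic function of the slice of $C$ consisting of those vertices whose $k$-th coordinate equals $\alpha$, and so takes values in $\{0,1\}$; the same holds for $\beta$. Their difference therefore takes values in $\{a-b : a,b\in\{0,1\}\} = \{-1,0,1\}$, as claimed.

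I do not anticipate a genuine obstacle here: the argument is essentially mechanical once the decomposition $\chi_C = c\cdot\mathbf{1} + g$ is in hand. The only point requiring care is verifying that $(\cdot)_k^{\alpha}$ maps constants to the same constant, so that the constant term drops out of the difference and Lemma~\ref{lem_smalred} can be invoked on the genuine eigenfunction $g$ rather than on $\chi_C$ (which is not itself an eigenfunction).
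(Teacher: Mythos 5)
Your proof is correct and follows essentially the same route as the paper: decompose $\chi_C$ into a constant plus a $\lambda_i(n,q)$-eigenfunction $g$, observe that the constant cancels in the difference $(\chi_C)_k^{\alpha}-(\chi_C)_k^{\beta}$, and apply part~1 of Lemma~\ref{lem_smalred} to $g$. The paper's proof is just a one-sentence compression of exactly this argument, with the $\{0,1,-1\}$ value claim left implicit.
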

\begin{proof}
The function $\chi_{C}$ is the sum of a
$\lambda_i(n,q)$-eigenfunction and a constant function and the
latter vanishes in the expression
$(\chi_{C})_{k}^{\alpha}-(\chi_{C})_{k}^{\beta}$.
\end{proof}

\section{Constructions of equitable 2-partitions of H(n,q) with the second eigenvalue}
\label{sec:const}

Let $f$ be a function defined on the cartesian product
$B_1\times\ldots\times B_n$. We say $i\in {\cal N}$ is an {\it
essential coordinate} of $f$, if there are $i$-adjacent vertices
$x$ and $y$ such that $f(x)\neq f(y)$. A coordinate is {\it
essential} for a 2-partition of $B_1\times\ldots\times B_n$ if it
is essential for the characteristic function of one of its cells.
Given a function $f:{\cal{A}}^{n}\longrightarrow{\mathbb{R}}$,  we
define the function $f^{+}:{\cal
A}^{n+1}\longrightarrow{\mathbb{R}}$ by the rule
$f^{+}(x_1,\ldots,x_n,x_{n+1})=f(x_1,\ldots,x_n)$ for any
$x_{n+1}\in {\cal A}$.

The following result is a folklore, its proof is straightforward.
Lemma in case of equitable 2-partitions of the Hamming graph
$H(n,2)$ could be found in \cite{FDF1}.

\begin{lemma}\cite{FDF1}
\label{Lem_noness} 1. Let $f$ be a real-valued function defined on
${\cal A}^n$. Then $f$ is $\lambda_i(n,q)$-eigenfunction iff $f^+$
is $\lambda_i(n+1,q)$-eigenfunction.

2. Let $(C,\overline{C})$ be arbitrary 2-partition of $H(n,q)$,
$i$ be a nonessential coordinate of the partition. Then $(C,
\overline{C})$ is equitable with eigenvalue $\lambda_i(n,q)$ iff
the partition of $H(n-1,q)$ obtained by deleting $i$th position in
all tuples of $C$ and $\overline{C}$ is equitable with eigenvalue
$\lambda_i(n-1,q)$.
\end{lemma}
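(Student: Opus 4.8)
The plan is to prove both parts by computing how the adjacency operator acts on a lifted function $f^{+}$, and then to deduce the statement about partitions from the statement about eigenfunctions. Let $A_n$ denote the adjacency operator of $H(n,q)$, acting on a function $g$ by $(A_n g)(x)=\sum_{y}g(y)$, the sum ranging over neighbours $y$ of $x$. Writing a vertex of $H(n+1,q)$ as $(x,\beta)$ with $x\in{\cal A}^n$ and $\beta\in{\cal A}$, its neighbours are the vertices $(y,\beta)$ with $y$ adjacent to $x$ in $H(n,q)$, together with the $q-1$ vertices $(x,\gamma)$ with $\gamma\ne\beta$. Since $f^{+}(x,\beta)=f(x)$ for every $\beta$, summing over these neighbours yields the pointwise identity
$$(A_{n+1}f^{+})(x,\beta)=(A_n f)(x)+(q-1)f(x).$$

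For part 1 the equivalence is then immediate from this identity. If $A_n f=\lambda_i(n,q)f$, the right-hand side equals $\big((q-1)n-qi+(q-1)\big)f(x)=\lambda_i(n+1,q)\,f^{+}(x,\beta)$, so $f^{+}$ is a $\lambda_i(n+1,q)$-eigenfunction; conversely, if $A_{n+1}f^{+}=\lambda_i(n+1,q)f^{+}$, the same identity forces $A_n f=\lambda_i(n,q)f$. The all-zero case is trivial, since $f\equiv 0$ if and only if $f^{+}\equiv 0$.

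For part 2 I would first record the characterization underlying Lloyd's theorem: a 2-partition $(D,\overline D)$ of a $k$-regular graph is equitable with eigenvalue $\theta$ if and only if $\chi_{D}=\mu\mathbf{1}+h$, where $\mathbf{1}$ is the all-ones function, $\mu=|D|/|V|$, and $h$ is a $\theta$-eigenfunction. The forward implication is exactly the remark recalled in the text preceding Proposition~\ref{prop_orth_arr}; the converse follows by applying $A$ to $\mu\mathbf{1}+h$ and reading off that $A\chi_D$ takes the constant value $\mu k+\theta(1-\mu)$ on $D$ and the constant value $\mu k-\theta\mu$ on $\overline D$, whose difference is $\theta$. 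Now, since $i$ is nonessential for $(C,\overline C)$, after relabelling coordinates I may assume $i=n$, so $\chi_{C}$ does not depend on the last coordinate; hence $\chi_{C}=g^{+}$, where $g=\chi_{C'}$ is the characteristic function of the reduced code $C'\subseteq{\cal A}^{n-1}$. Noting that $\mathbf{1}=\mathbf{1}^{+}$ and that $|C|/q^{n}=|C'|/q^{n-1}=:\mu$, so the same density constant serves both partitions, I obtain the chain of equivalences: $(C,\overline C)$ is equitable with eigenvalue $\lambda_i(n,q)$ $\iff$ $\chi_{C}-\mu\mathbf{1}=(g-\mu\mathbf{1})^{+}$ is a $\lambda_i(n,q)$-eigenfunction $\iff$ $g-\mu\mathbf{1}$ is a $\lambda_i(n-1,q)$-eigenfunction (by part 1 in dimension $n-1$) $\iff$ $(C',\overline{C'})$ is equitable with eigenvalue $\lambda_i(n-1,q)$, which is the claim.

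I expect no serious obstacle, as the statement is folklore. The only points requiring care are the bookkeeping that the eigenvalue index $i$ is preserved while the dimension drops by one (so that $\lambda_i(n,q)$ corresponds to $\lambda_i(n-1,q)$ and not to some shifted index), and the verification that one and the same density constant $\mu$ fits both the lifted and the reduced decompositions, so that $\chi_{C}=\mu\mathbf{1}+h$ and $\chi_{C'}=\mu\mathbf{1}+h'$ are genuinely matched by the operation $(\cdot)^{+}$.
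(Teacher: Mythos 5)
Your proposal is correct, but there is nothing in the paper to compare it against: the paper explicitly declares this lemma folklore, omits the proof entirely, and only cites \cite{FDF1} for the binary case. Your route is the natural one and is consistent with the machinery the paper itself sets up in Section \ref{sec:2}. Part 1 via the pointwise identity $(A_{n+1}f^{+})(x,\beta)=(A_nf)(x)+(q-1)f(x)$ is exactly the ``straightforward'' computation the authors allude to, and it correctly shows that the eigenvalue index is preserved, since the degree grows by $q-1$ while $\lambda_i(n+1,q)-\lambda_i(n,q)=q-1$. Part 2 through the Lloyd-type equivalence ($(D,\overline{D})$ equitable with eigenvalue $\theta$ iff $\chi_D=\mu\mathbf{1}+h$ with $h$ a $\theta$-eigenfunction) is sound: the forward half is the remark preceding Proposition \ref{prop_orth_arr}, your converse computation (reading off the constant values $\mu k+\theta(1-\mu)$ on $D$ and $\mu k-\theta\mu$ on $\overline{D}$) is right, and so is the bookkeeping $\chi_C-\mu\mathbf{1}=(\chi_{C'}-\mu\mathbf{1})^{+}$ with the same density $\mu$. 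One small caveat: your two-sided characterization implicitly assumes $\theta\neq k$, i.e.\ eigenvalue index at least $1$; for $i=0$ the phrase ``equitable with eigenvalue $\lambda_0(n,q)$'' holds for every equitable 2-partition, while $\chi_C-\mu\mathbf{1}$ is then generally not a $\lambda_0$-eigenfunction, so the first link in your chain of equivalences breaks. Since the paper invokes the lemma only for $i=1,2$ this is immaterial, but a complete write-up should either restrict to $i\geq 1$ or treat $i=0$ separately (where both sides of part 2 reduce to plain equitability, which your argument also gives after dropping the eigenvalue bookkeeping).
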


We say that a function defined on $B_1\times\ldots\times B_n$
(2-partition of $B_1\times\ldots\times B_n$ respectively) is {\it
reduced} if all coordinates from ${\cal N}$ are essential for the
function (the characteristic function of a cell of the partition
respectively).

\subsection{Permutation switching construction}

Here we present a construction of equitable 2-partitions of
$H(n,q)$ from specific equitable 2-partitions of $H(2,q)$ by
switchings of coordinate positions.

Recall that the {\em Cartesian product} $G\square H$ of graphs $G$ and $H$ is a graph with the vertex set $V(G)\times V(H)$; and
any two vertices $(u,u')$ and $(v,v')$ are adjacent if and only if either
$u=v$ and $u'$ is adjacent to $v'$ in $H$, or
$u'=v'$ and $u$ is adjacent to $v$ in $G$.

Firstly, we describe all equitable 2-partitions of
$H(1,q)\square H(1,q')$ with the smallest eigenvalue.

\begin{proposition}\label{Prop_ep_2}
A 2-partition $(C, \overline{C})$ of $H(1,q)\square H(1,q')$ is
equitable with the quotient matrix $S$ and eigenvalue $-2$ if and
only if $|C\cap K|/|K|=S_{21}/(q+q')$ for any maximal clique $K$
of $H(1,q)\square H(1,q')$.
\end{proposition}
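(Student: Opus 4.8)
The plan is to reduce everything to counting, exploiting the product structure of $H(1,q)\square H(1,q')$. First I would record what this graph is: since $H(1,q)$ and $H(1,q')$ are the complete graphs $K_q$ and $K_{q'}$, their Cartesian product is the rook's graph on a $q\times q'$ array; it is regular of degree $k=q+q'-2$, and its maximal cliques are precisely the $q$ rows (of size $q'$) and the $q'$ columns (of size $q$), since any triangle with both a row-edge and a column-edge would force two vertices differing in both coordinates. I would also translate the eigenvalue hypothesis by Proposition~\ref{prop_orth_arr}: the eigenvalue $-2$ means $S_{11}-S_{21}=-2$, and combined with $S_{11}+S_{12}=k$ this gives $S_{12}+S_{21}=q+q'$, so the target proportion $S_{21}/(q+q')$ equals exactly $|C|/|V(G)|$, with $|C|=qq'S_{21}/(q+q')$.

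The key observation is a single neighbour-count identity. Writing $R_a=|C\cap(\mathrm{row}\ a)|$ and $L_b=|C\cap(\mathrm{column}\ b)|$, the neighbours of a vertex $(a,b)$ are exactly the other vertices of its row together with the other vertices of its column, so the number of its neighbours lying in $C$ equals $R_a+L_b$ if $(a,b)\in\overline{C}$, and $R_a+L_b-2$ if $(a,b)\in C$ (in the latter case $(a,b)$ itself is removed once from each of its row- and column-counts). This identity drives both implications.

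For the forward direction I would assume $(C,\overline{C})$ is equitable with eigenvalue $-2$. Applying the identity to $(a,b)\in C$ gives $R_a+L_b-2=S_{11}$, and to $(a,b)\in\overline{C}$ gives $R_a+L_b=S_{21}$; since $S_{11}=S_{21}-2$, in both cases $R_a+L_b=S_{21}$ for every $(a,b)$. Fixing $b$ and varying $a$ (and vice versa) then forces $R_a$ to be constant in $a$ and $L_b$ constant in $b$. Summing over all rows and over all columns and invoking $|C|=qq'S_{21}/(q+q')$ pins the constants down to $R_a=q'S_{21}/(q+q')$ and $L_b=qS_{21}/(q+q')$, which is precisely the asserted clique proportion for the two clique sizes.

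The backward direction is the converse computation. Assuming $|C\cap K|/|K|=S_{21}/(q+q')$ for every maximal clique immediately makes $R_a\equiv R=q'S_{21}/(q+q')$ and $L_b\equiv L=qS_{21}/(q+q')$, whence $R+L=S_{21}$. Substituting into the identity shows that each $(a,b)\in C$ has $R+L-2=S_{21}-2$ neighbours in $C$ and each $(a,b)\in\overline{C}$ has $R+L=S_{21}$ neighbours in $C$; thus the partition is equitable with $S_{11}=S_{21}-2$ and eigenvalue $S_{11}-S_{21}=-2$, as required. I do not expect a serious obstacle: the only real content is the \emph{separability} step in the forward direction—that $R_a+L_b$ being constant over the whole array forces each of $R_a$ and $L_b$ to be constant separately—together with the bookkeeping that the two unequal clique sizes yield the same normalised proportion $S_{21}/(q+q')$.
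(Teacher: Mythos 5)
Your proof is correct, but your necessity direction takes a genuinely different route from the paper's. You exploit the rook's-graph structure directly: with row/column counts $R_a$, $L_b$, the neighbour identity (a vertex $(a,b)$ has $R_a+L_b$ or $R_a+L_b-2$ neighbours in $C$ according to whether it lies in $\overline{C}$ or $C$) combined with the eigenvalue condition $S_{11}=S_{21}-2$ forces $R_a+L_b\equiv S_{21}$ over the whole array, after which separability and summation pin down both constants. The paper instead argues spectrally: for any maximal clique $K$, the pair $(K,\overline{K})$ is itself an equitable 2-partition with eigenvalue $|K|-2\neq -2$; writing $\chi_K=f_0+f_1$ and $\chi_C=h_0+f_2$ (constant plus eigenfunction) and using orthogonality of eigenvectors belonging to distinct eigenvalues, the inner product $|C\cap K|=\sum_v h_0(v)f_0(v)$ depends only on $|K|$, and then partitioning the graph into cliques of size $|K|$ together with Proposition \ref{prop_orth_arr} yields the ratio $S_{21}/(q+q')$. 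Your argument is more elementary and handles both directions by one counting identity (your sufficiency computation is essentially what the paper does, since the paper merely exhibits the quotient matrix there); the paper's orthogonality argument is less hands-on but more portable — it is a Delsarte-type inner-product argument that applies whenever two equitable partitions have distinct non-principal eigenvalues, without needing to identify the maximal cliques as rows and columns. One minor remark: in the necessity direction you need not invoke Proposition \ref{prop_orth_arr} for $|C|$ at all, since $qR=q'L=|C|$ together with $R+L=S_{21}$ already gives $R=q'S_{21}/(q+q')$ and $L=qS_{21}/(q+q')$.
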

\begin{proof}
Let $C$ be a subset of $H(1,q)\square H(1,q')$ such that any maximal
clique $K$ contains exactly $a|K|$ vertices of $C$. Then  the
partition $(C,\overline{C})$ is equitable with the quotient matrix $\left(%
\begin{array}{cc}
  a(q+q')-2 & (1-a)(q+q') \\
  a(q+q') & (1-a)(q+q')-2 \\
\end{array}%
\right)$ and has eigenvalue -2.

If $K$ is a maximal clique in $H(1,q)\square H(1,q')$, then it is
easy to see that the partition $(K,\overline{K})$ is an equitable
2-partition of $H(1,q)\square H(1,q')$ with the
quotient matrix $\left(%
\begin{array}{cc}
  |K|-1 & q+q'-1-|K| \\
  1 & q+q'-3 \\
\end{array}%
\right)$ that has eigenvalue $|K|-2$. Let $(C, \overline{C})$ be
an equitable partition of $H(1,q)\square H(1,q')$ with the
eigenvalue $-2$. Then $\chi_K=f_0+f_1$, $\chi_{C}=h_0+f_2$, where
$f_0$ and $h_0$ are constant functions that are uniquely defined
by the quotient matrices of the partitions, $f_1$ is a
$(|K|-2)$-eigenfunction and $f_2$ is a $(-2)$-eigenfunction. Since
eigenvectors of the adjacency matrix with different eigenvalues
are orthogonal,
 we have that:
$$ |C\cap K|=\sum_{v\in V(H(1,q)\square H(1,q'))}\chi_{K}(v)\chi_{C}(v)=\sum_{v\in V(H(1,q)\square H(1,q'))}
h_0(v)f_0(v),
$$
which does not depend on $K$, but only on $|K|$, since the value
of $f_0$ is uniquely determined by the quotient matrix of the
partition $(K,\overline{K})$. Since the graph $H(1,q)\square
H(1,q')$ could be parted into maximal cliques of the size $|K|$,
the number $|C\cap K|/|K|$ is equal to $|C|/qq'$. Finally, from
Proposition \ref{prop_orth_arr} we have
$|C|=S_{21}qq'/(S_{12}+S_{21})$, which taking into account that
$S_{12}+S_{21}=q+q'-2-S_{11}+S_{21}=q+q'$ implies that the
percentage of $C$ in $K$ is $S_{21}/(q+q')$.

\end{proof}

{\bf Construction A.}
 Let $A_1,\ldots,A_{n-1}$ be a partition of ${\cal A}$.
 The vertices of $H(2,q)$ are parted into $A_i\times {\cal A}$, $i=1,\ldots,n-1$.
 Consider a 2-partition $(C,\overline{C})$ of $H(2,q)$ such that
$|C\cap K|/|K|$ is the same for any maximal clique $K$ of any
subgraph induced by $A_i\times {\cal A}$, $i=1,\ldots,n-1$. By
Proposition \ref{Prop_ep_2} the restriction of $(C,\overline{C})$
to any of these subgraphs is equitable. Moreover,
$(C,\overline{C})$ is an equitable 2-partition of $H(2,q)$ with
the eigenvalue $\lambda_2(2,q)=-2$ as any maximal clique of
$H(2,q)$ is a union of maximal cliques of the subgraphs.

Define $C^{+}$ to be obtained from $C$ by adding $n-2$
nonessential coordinate positions, i.e.
$C^{+}=\{(x_1,x_2,y_3,\ldots,y_n): (x_1,x_2)\in C, y_i\in {\cal
A}, i \in \{3,\ldots,n\} \}.$

From Lemma \ref{Lem_noness} the partition
$(C^{+},{\overline{C^{+}}})$ of $H(n,q)$ is equitable with
eigenvalue $\lambda_2(n,q)$. The construction allows switchings of
coordinates to be applied. Given a permutation $\pi$ of the
coordinates from ${\cal N}$ and $x\in {\cal A}^n$ by $\pi(x)$
denote the tuple $(x_{\pi(1)},\ldots,x_{\pi(n)})$, for a subset
$M$ of ${\cal A}^n$ denote by $\pi(M)$ the set $\{\pi(x):x \in
M\}$. Let $\pi_i$ be the transposition $(2,i+1)$, and $\pi_1$ be
the identity permutation. Define $C_\pi^{+}$ to be
$$\bigcup_{i=1,\ldots,n-1} \pi_i( A_i\times {\cal A}^{n-1}\cap
C^+).$$ Note that the essential coordinates of
$(C_{\pi},\overline{C_{\pi}})$ are ${\cal N}$, while those of
$(C,\overline{C})$ are 1 and 2. The construction above is somewhat
similar to the construction of nonsystematic binary 1-perfect
codes by consecutive switchings of $i$-components \cite{AvgSol}.

\begin{theorem}
The partition $({C}^{+}_\pi, \overline{C^{+}_\pi})$ is an
equitable 2-partition of $H(n,q)$ with the same quotient matrix as
that of $(C^{+},\overline{C^{+}})$ and eigenvalue
$\lambda_2(n,q)$.
\end{theorem}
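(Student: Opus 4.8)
The plan is to verify equitability of $(C_\pi^{+},\overline{C_\pi^{+}})$ directly, by showing that for every vertex $v$ the number of its neighbours lying in $C_\pi^{+}$ depends only on whether $v\in C_\pi^{+}$, and equals the corresponding entry ($S_{11}$ if $v\in C_\pi^{+}$, $S_{21}$ otherwise) of the quotient matrix of $(C^{+},\overline{C^{+}})$. Since each $\pi_i$ is a bijection of the block $D_i:=A_i\times{\cal A}^{n-1}$ onto itself (it fixes the first coordinate), we have $|C_\pi^{+}|=\sum_i|C^{+}\cap D_i|=|C^{+}|$, so $\chi_{C_\pi^{+}}$ and $\chi_{C^{+}}$ share the same constant component; hence it suffices to match the neighbour counts, and the eigenvalue $\lambda_2(n,q)=S_{11}-S_{21}$ will follow from Proposition~\ref{prop_orth_arr}.

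First I would record the membership rule. As $\pi_i=(2,\,i+1)$ fixes coordinate $1$, the set $C_\pi^{+}\cap D_i=\pi_i(C^{+}\cap D_i)$ consists exactly of those $v$ with $v_1\in A_i$ and $(v_1,v_{i+1})\in C$. Fix $v\in D_i$ and split its neighbourhood according to the coordinate being changed: (i) coordinate $1$ within $A_i$; (ii) coordinate $i+1$; (iii) the remaining coordinates in $\{2,\dots,n\}\setminus\{i+1\}$, of which there are $n-2$; and (iv) coordinate $1$ crossing into another block $D_{i'}$. Classes (i)--(iii) stay inside $D_i$, where membership in $C_\pi^{+}$ is governed by the pair $(v_1,v_{i+1})$, so these counts are read off from the row-clique of $C$ through $v_1$ and the column-clique of $C$ through $v_{i+1}$ inside the block $A_i\times{\cal A}$.

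The heart of the argument is class (iv). A neighbour $w$ obtained by changing coordinate $1$ from $v_1\in A_i$ to some $w_1\in A_{i'}$ satisfies $w\in C_\pi^{+}$ iff $(w_1,v_{i'+1})\in C$, so the number of such neighbours equals the number of points of $C$ in the column-clique of the block $A_{i'}\times{\cal A}$ with second coordinate $v_{i'+1}$. By Proposition~\ref{Prop_ep_2} applied to Construction~A, every maximal clique of every block carries the same fraction $\rho$ of $C$; in particular this column count equals $\rho|A_{i'}|$ and does \emph{not} depend on $v_{i'+1}$. This vertex-independence of the cross-block contribution is the crux: it is precisely what the uniform-density hypothesis of Construction~A provides, and without it the count would vary with $v$ and equitability would fail. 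I expect this reconciliation --- that membership in $C_\pi^{+}$ consults a different coordinate ($i'+1$) in each block yet still yields a $v$-independent cross count --- to be the only real obstacle.

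Finally I would sum the four contributions. With $\rho=S_{21}/(2q)$ the common clique density, class (i) contributes $\rho|A_i|$ and class (ii) contributes $\rho q$, each reduced by $1$ when $v\in C_\pi^{+}$ (since $v$ itself lies in the relevant clique); class (iii) contributes $(n-2)(q-1)$ exactly when $v\in C_\pi^{+}$; and class (iv) contributes $\sum_{i'\neq i}\rho|A_{i'}|=\rho(q-|A_i|)$. The block size $|A_i|$ cancels, and the total collapses to $2\rho q=S_{21}$ plus $(n-2)(q-1)-2$ times the indicator of $v\in C_\pi^{+}$. Since $S_{11}-S_{21}=\lambda_2(n,q)=(q-1)n-2q$ coincides with $(n-2)(q-1)-2$, the neighbour count is $S_{21}$ for $v\notin C_\pi^{+}$ and $S_{11}$ for $v\in C_\pi^{+}$, which is exactly the quotient matrix of $(C^{+},\overline{C^{+}})$, with eigenvalue $\lambda_2(n,q)$ as claimed.
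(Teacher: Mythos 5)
Your proof is correct, but it works by a different mechanism than the paper's. You verify equitability from scratch: after recording the explicit membership rule ($v\in C_\pi^{+}$ iff $v_1\in A_i$ and $(v_1,v_{i+1})\in C$), you split the neighbourhood of an arbitrary vertex into four directional classes, evaluate each count through the common clique density $\rho=S_{21}/(2q)$, and the algebra returns exactly $S_{21}$ and $S_{11}=S_{21}+(n-2)(q-1)-2$. (One small slip: the uniform density in every maximal clique of every block is the \emph{defining hypothesis} of Construction A, not something ``provided by Proposition~\ref{Prop_ep_2}''; that proposition is the converse characterization. This does not affect your argument.) The paper instead argues by transfer: the map $(\cdot)_\pi$ restricted to each block $A_i\times{\cal A}^{n-1}$ is a graph automorphism carrying $C^{+}$ onto $C_\pi^{+}$ inside that block, so for every direction $j\geq 2$ the $j$-neighbours of $x$ lying in $C^{+}$ biject with the $j$-neighbours of $(x)_\pi$ lying in $C_\pi^{+}$; for direction $1$, the $1$-clique through a vertex decomposes into block sub-cliques, each of which meets $C^{+}$ and $C_\pi^{+}$ in the same proportion because $(\cdot)_\pi$ permutes these sub-cliques. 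Thus the paper never computes any counts --- it inherits them from the already established equitability of $(C^{+},\overline{C^{+}})$. Both arguments turn on the same crux that you correctly identified: the vertex-independence of the cross-block (direction-$1$) contribution, which is precisely what the uniform-density hypothesis supplies. Your direct count has the merit of being self-contained (it reproves equitability of $(C^{+},\overline{C^{+}})$ as the case where all $\pi_i$ are trivial, and it exhibits the quotient matrix and the eigenvalue identity $S_{11}-S_{21}=\lambda_2(n,q)$ by explicit computation); the paper's automorphism argument avoids all arithmetic and isolates the structural reason why the switching preserves the quotient matrix.
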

\begin{proof}
Let $S$ be the quotient matrix of the partition
$(C^{+},\overline{C^{+}})$. Define $(x)_{\pi}$ to be $\pi_i(x)$,
for all $i\in {\cal N}\setminus n$, $x\in A_i\times{\cal
A}^{n-1}$. The mapping $(\cdot)_{\pi}$ permutes the second and
$(i+1)$th coordinate positions for vertices in $A_i\times{\cal
A}^{n-1}$, so its restriction to the subgraph induced by
$A_i\times{\cal A}^{n-1}$ is an automorphism of the subgraph. The
permutation $\pi_i$ fixes 1, so the restriction of $(\cdot)_{\pi}$
to the subgraph induced by $A_i\times{\cal A}^{n-1}$ acts on the
set of maximal cliques consisting of 1-adjacent vertices of the
subgraph. By the choice of the initial partition
$(C,\overline{C})$, the percentages of $C^{+}$ in all maximal
cliques consisting of pairwise 1-adjacent vertices of the subgraph
are the same and are equal to $S_{21}/(S_{12}+S_{21})$. Since
$(C^{+})_{\pi}$ is $C_{\pi}$ and $(\cdot)_{\pi}$ permutes these
cliques,  the percentage of $C_{\pi}$ in these cliques is
$S_{21}/(S_{12}+S_{21})$.

For a vertex $x\in C^{+}$, $x_1\in A_i$ we see that
$(\cdot)_{\pi}$ maps the set of all $j$-neighbors from $C^{+}$,
for all $j\geq 2$, to the set of all $j$-neighbors of $(x)_{\pi}$
from $C_{\pi}$ because they are all in $A_i\times A^{n-1}$. A
maximum clique of $H(n,q)$ formed by pairwise 1-adjacent vertices
is the union of maximal cliques of the subgraphs induced by
$A_i\times{\cal A}^{n-1}$, $i \in {\cal N}\setminus n$. Since the
percentages of $C$ and $C_{\pi}$ in the cliques in subgraphs are
the same, we conclude that the numbers of
 vertices that are 1-adjacent to $x$ from $C^{+}$ and that of $(x)_{\pi}$
from $C_{\pi}$ coincide, so $(x)_{\pi}$ is adjacent to $S_{11}$
vertices of $C_{\pi}$. The proof in case when $x\in
\overline{C^{+}}$ is analogous.

\end{proof}

\subsection{Alphabet liftings of two induced cycles in H(4,2)}
Consider the construction of equitable partitions using alphabet
liftings by Vorob'ev in \cite{Vor}, which resembles the Zinoviev
construction for perfect codes \cite{Zin} (see also \cite{Sol}).
Let $A_0,\ldots,A_{q'-1}$ be the sets of the same size that
partition ${\cal A}$.  Here we identify the alphabet set ${\cal
A}'$ of $H(n,q')$ with $\{0,\ldots,q'-1\}$.

Given a partition $(C_1,\ldots,C_r)$ of $H(n,q')$ define $D_i$,
$1\leq i\leq r$ to be
$$\bigcup_{(x_1,\ldots,x_n)\in C_i}A_{x_1}\times\ldots\times A_{x_n}.$$

\begin{theorem}
\cite{Vor} Let $(C_1,\ldots,C_r)$ be an equitable partition of
$H(n,q')$ with the set of eigenvalues $\{\lambda_i(n,q'):i \in
I\}$. Then the partition $(D_1,\ldots,D_r)$ is an equitable
partition of $H(n,q)$ with the set of eigenvalues
$\{\lambda_i(n,q):i \in I\}$.
\end{theorem}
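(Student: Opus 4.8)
The plan is to realize each cell $D_i$ as the full preimage $\phi^{-1}(C_i)$ under the coordinatewise folding map $\phi:{\cal A}^n\to ({\cal A}')^n$ that sends a tuple $(y_1,\ldots,y_n)$ to $(x_1,\ldots,x_n)$, where $x_k$ is the unique index with $y_k\in A_{x_k}$. Writing $t=q/q'$ for the common size of the blocks $A_0,\ldots,A_{q'-1}$ (here the equal-size hypothesis is what makes $t$ a constant), I would first express the quotient matrix of $(D_1,\ldots,D_r)$ in terms of the quotient matrix $S'$ of $(C_1,\ldots,C_r)$, and then read off the eigenvalues from that relation.

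To prove equitability, I would fix $y\in D_i$, set $x=\phi(y)\in C_i$, and split the $n(q-1)$ neighbors of $y$ in $H(n,q)$ into two types according to their image under $\phi$. Changing $y_k$ to another element of the same block $A_{x_k}$ keeps the image equal to $x$; there are $n(t-1)$ such neighbors and all of them lie in $D_i$. Changing $y_k$ to an element of a different block $A_j$ sends the image to the $H(n,q')$-neighbor $x'$ of $x$ obtained by replacing its $k$-th coordinate by $j$, and for each of the $n(q'-1)$ neighbors $x'$ of $x$ there are exactly $t$ such neighbors of $y$. Since $(C_1,\ldots,C_r)$ is equitable, the number of $x'\sim x$ lying in $C_m$ equals $S'_{im}$, so the number of neighbors of $y$ in $D_m$ equals $tS'_{im}+n(t-1)\delta_{im}$, independent of the choice of $y\in D_i$. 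Hence $(D_1,\ldots,D_r)$ is equitable with quotient matrix $S=tS'+n(t-1)I$.

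It then remains to identify the eigenvalues. Because $S$ is an affine function of $S'$, any eigenvector of $S'$ for an eigenvalue $\mu$ is an eigenvector of $S$ for $t\mu+n(t-1)$, so the map $\mu\mapsto t\mu+n(t-1)$ carries the spectrum of $S'$ bijectively onto that of $S$. A direct substitution of $\mu=\lambda_i(n,q')=(q'-1)n-q'i$, using $tq'=q$, gives $t\mu+n(t-1)=(q-1)n-qi=\lambda_i(n,q)$; in particular the index $i$ is preserved. Therefore the set of eigenvalues of $(D_1,\ldots,D_r)$ is exactly $\{\lambda_i(n,q):i\in I\}$, as claimed.

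The argument is essentially a bookkeeping exercise, and the only place demanding care is the neighbor count: one must not overlook the within-block changes, which correspond to no edge of $H(n,q')$ yet contribute the extra diagonal term $n(t-1)$ and are precisely what converts the smaller Hamming eigenvalues into the larger ones.
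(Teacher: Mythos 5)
Your proposal is correct, and there is nothing in the paper to compare it against: the paper states this theorem as a cited result of Vorob'ev (reference [Vor], a conference abstract) and includes no proof at all. Your argument is a valid, self-contained derivation. The key computation checks out: realizing $D_i=\phi^{-1}(C_i)$ under the folding map, the neighbor count splits correctly into $n(t-1)$ within-block neighbors (all staying in $D_i$) and $t$ preimage neighbors for each $H(n,q')$-neighbor of $\phi(y)$, giving the quotient matrix relation $S=tS'+n(t-1)I$; the degree tally $n(t-1)+nt(q'-1)=n(q-1)$ confirms no neighbors are missed. The affine relation between $S$ and $S'$ then sends the spectrum of $S'$ bijectively onto that of $S$, and the substitution $t\lambda_i(n,q')+n(t-1)=(q-1)n-qi=\lambda_i(n,q)$ shows the eigenvalue index $i$ is preserved, which is exactly the content of the theorem. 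In fact your proof yields slightly more than the statement: the explicit formula $S=tS'+n(t-1)I$ for the lifted quotient matrix, with multiplicities of eigenvalues preserved, which the bare statement (and the paper's use of it for Construction B) does not record.
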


 Let
$C=\{(0001),(0011),(0010),(0110),(1110),(1100), (1101),(1001)\}$.
The complement $\overline{C}$ of $C$ is $\{(0000),(0100),(0101),
(0111),(1111),(1011),$ $(1010),(1000)\}$. We see that both $C$ and
$\overline{C}$ are induced cycles in $H(4,2)$, so any vertex of
$C$ is adjacent to exactly two vertices of $C$ and the partition
$(C,\overline{C})$
is equitable with the quotient matrix $\left(%
\begin{array}{cc}
  2 & 2 \\
  2 & 2 \\
\end{array}%
\right)$. Moreover, all 4 coordinates of the partition are
essential and $\lambda_2(4,2)=0$ is its eigenvalue.

\begin{corollary} (construction B)
Let $(C,\overline{C})$ be a partition of $H(4,2)$ into two induced
cycles of length 8. $A_0$, $A_1=\overline{A_0}$, $|A_0|=|A_1|=q/2$
and $D$ be $\bigcup_{(x_1,x_2,x_3,x_4)\in C}A_{x_1}\times
A_{x_2}\times A_{x_3}\times A_{x_4}$. Then $(D,\overline{D})$ is
an equitable partition of $H(4,q)$ with eigenvalue
$\lambda_2(4,q)$.
\end{corollary}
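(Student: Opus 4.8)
The plan is to read this off as a direct application of the preceding theorem of Vorob'ev, since the alphabet lifting construction has already done all the structural work. First I would record that the partition $(C,\overline{C})$ exhibited just before the corollary is an equitable 2-partition of $H(4,2)$ whose quotient matrix has all entries equal to $2$, hence eigenvalues $4$ and $0$. Using the explicit spectrum $\lambda_i(4,2)=4-2i$ for $i\in\{0,\ldots,4\}$, I would match the trivial eigenvalue $4=\lambda_0(4,2)$ with index $0$ and the nontrivial eigenvalue $0=\lambda_2(4,2)$ with index $2$. Thus the set of eigenvalues of $(C,\overline{C})$ is $\{\lambda_i(4,2):i\in I\}$ for the index set $I=\{0,2\}$.

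Next I would specialize the lifting construction to $q'=2$ with the equipartition $A_0,\,A_1=\overline{A_0}$, $|A_0|=|A_1|=q/2$, which tacitly requires $q$ even. By definition the lifted cell $D=\bigcup_{(x_1,x_2,x_3,x_4)\in C}A_{x_1}\times A_{x_2}\times A_{x_3}\times A_{x_4}$ is exactly the set $D_1$ produced from the first cell $C$, and $\overline{D}$ is the set $D_2$ produced from $\overline{C}$. Hence $(D,\overline{D})$ is precisely the partition $(D_1,D_2)$ to which Vorob'ev's theorem applies with $n=4$.

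Applying that theorem then gives that $(D,\overline{D})$ is an equitable partition of $H(4,q)$ whose set of eigenvalues is $\{\lambda_i(4,q):i\in I\}=\{\lambda_0(4,q),\lambda_2(4,q)\}$. In particular $\lambda_2(4,q)$ is an eigenvalue of the partition, which is the assertion of the corollary.

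I expect no real obstacle: the argument is entirely bookkeeping of eigenvalue indices through the lifting. The only step deserving care is verifying that the nontrivial eigenvalue $0$ of the binary partition has index $2$ and not some other value; this is immediate from $\lambda_i(4,2)=4-2i$, and because Vorob'ev's theorem preserves the index set $I$ rather than the numerical eigenvalues, it is precisely this index that produces the eigenvalue $\lambda_2(4,q)$ after the passage to alphabet size $q$.
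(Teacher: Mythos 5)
Your proof is correct and follows exactly the route the paper intends: the corollary is stated as an immediate consequence of Vorob'ev's alphabet lifting theorem applied to the equitable 2-partition of $H(4,2)$ into two induced 8-cycles, whose quotient matrix has eigenvalues $4=\lambda_0(4,2)$ and $0=\lambda_2(4,2)$, so the index set $I=\{0,2\}$ is preserved and the lifted partition has eigenvalue $\lambda_2(4,q)$. Your bookkeeping of the eigenvalue indices (and the observation that $q$ must be even) matches the paper's implicit argument.
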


\section{Main results}\label{sec:main}
Here we obtain the description of $\lambda_1(n,q)$-eigenfunctions
with the values in $\{0,1,-1\}$ and utilize it for the
reconstruction of equitable 2-partitions of $H(n,q)$ with
eigenvalue $\lambda_2(n,q)$.

\subsection{$\lambda_1(n,q)$-eigenfunctions with the values in \{0,1,-1\}}

 Consider two functions on the
vertices of Hamming graphs. Given $i \in {\cal N}$, $A, B\subset
{\cal A}$, $A\cap B=\varnothing$, $A\cup B\neq \varnothing$  we
say that $f$ defined on ${\cal A}^n$ is the $(A,B,i)$-{\it quasi
string} if

$$f(x)=
\begin{cases}
    1, & x_i \in A \\
    -1, & x_i \in B \\
    0, & \hbox{otherwise}
\end{cases}.$$

Given $i,j \in {\cal N}$, $i\neq j$, and two nonempty subsets $A,
B$ of  ${\cal A}$ we say that $f$ defined on ${\cal A}^n$ is the
$(A,B,i,j)$-{\it quasi cross} if
$$f(x)=
\begin{cases}
    1, & x_i \in A, x_j \notin B\\
    -1, & x_i \notin A, x_j \in B  \\
    0, & \hbox{otherwise}
\end{cases}.$$

\begin{lemma}
Let $f$ be a function from ${\cal A}^n$ to $\{-1,0,1\}$. The
function $f$ belongs to ${U_{0}(n,q)\oplus U_{1}(n,q)}$ iff it is
the $(A,B,i,j)$-quasi cross or the $(A,B,i)$-quasi string or a
constant.
\end{lemma}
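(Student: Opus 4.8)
The plan is to reduce the whole statement to one clean description of the target space: a function on $\mathcal{A}^n$ lies in $U_0(n,q)\oplus U_1(n,q)$ if and only if it is a sum $\sum_{i=1}^{n}h_i(x_i)$ of functions each depending on a single coordinate. The inclusion ``$\supseteq$'' follows by splitting each $h_i=\bar h_i+g_i$ into its mean $\bar h_i$ (a constant, hence in $U_0$) and its zero-sum part $g_i$, and checking by the definition of the adjacency relation that each single-coordinate zero-sum function $g_i(x_i)$ is a $\lambda_1(n,q)$-eigenfunction (summing over the $q-1$ neighbors in coordinate $i$ contributes $-g_i$, over each other coordinate $(q-1)g_i$, giving eigenvalue $(n-1)(q-1)-1=\lambda_1(n,q)$). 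Equality then follows from a dimension count: the map $(h_1,\dots,h_n)\mapsto\sum_i h_i(x_i)$ has as kernel the $n$-tuples of constants summing to $0$, so its image has dimension $nq-(n-1)=1+n(q-1)=\dim\bigl(U_0(n,q)\oplus U_1(n,q)\bigr)$.

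Granting this, the ``if'' direction is immediate. A constant lies in $U_0(n,q)$. An $(A,B,i)$-quasi string depends only on $x_i$, so it is already of the form $h_i(x_i)$. For an $(A,B,i,j)$-quasi cross, let $g_i$ equal $1$ on $A$ and $0$ elsewhere, and $g_j$ equal $-1$ on $B$ and $0$ elsewhere; checking the four cases ($x_i\in A$ or not, $x_j\in B$ or not) shows the quasi cross equals $g_i(x_i)+g_j(x_j)$, hence lies in $U_0(n,q)\oplus U_1(n,q)$.

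For the ``only if'' direction, write $f=\sum_{i=1}^n h_i(x_i)$; subtracting $\min_\alpha h_i(\alpha)$ from each $h_i$ and collecting constants, assume $h_i\geq 0$ with $\min_\alpha h_i(\alpha)=0$, so that $f=c+\sum_i h_i(x_i)$ with $c=\min f$ and $\max f=c+\sum_i R_i$, where $R_i=\max_\alpha h_i(\alpha)$. Call $i$ \emph{essential} if $R_i>0$ and let $E$ be the set of essential coordinates. The crucial point is an integrality remark: for any $\alpha,\alpha'$ and any fixed values of the remaining coordinates, $h_i(\alpha)-h_i(\alpha')$ is the difference of two values of $f$, hence lies in $\{-2,-1,0,1,2\}\subset\mathbb{Z}$; therefore each $R_i$ is a nonnegative integer and $R_i\geq 1$ for $i\in E$. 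Since $\sum_{i\in E}R_i=\max f-\min f\leq 1-(-1)=2$, we obtain $|E|\leq 2$.

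It remains to read off the three cases. If $|E|=0$ then $f$ is constant. If $|E|=\{i\}$ then $f$ depends only on $x_i$ and takes values in $\{-1,0,1\}$, so it is the $(A,B,i)$-quasi string with $A=\{\alpha: f(x)=1$ whenever $x_i=\alpha\}$ and $B=\{\alpha: f(x)=-1$ whenever $x_i=\alpha\}$, whose union is nonempty since $f$ is nonconstant. If $|E|=\{i,j\}$, then $R_i=R_j=1$, and since each $h_i\geq 0$ has minimum $0$, maximum $1$, and pairwise integer differences, it takes values in $\{0,1\}$; the range being $2$ inside $\{-1,0,1\}$ forces $c=-1$. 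Setting $A=\{\alpha:h_i(\alpha)=1\}$ and $B=\{\beta:h_j(\beta)=0\}$, the four value combinations give $f=1$ exactly when $x_i\in A,\ x_j\notin B$, give $f=-1$ exactly when $x_i\notin A,\ x_j\in B$, and $f=0$ otherwise, with $A,B$ both nonempty; this is precisely the $(A,B,i,j)$-quasi cross. The only genuine obstacle is pinning down the characterization of $U_0(n,q)\oplus U_1(n,q)$ and then matching the normalized two-coordinate function to the exact quasi-cross definition; the integrality remark is what cleanly caps the number of essential coordinates at two.
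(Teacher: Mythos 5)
Your proof is correct, and it takes a genuinely different route from the paper's. The paper argues by induction on $n$: by the restriction lemma from \cite{V1} (Lemma~\ref{lem_smalred}), the differences $(f)_n^{\alpha}-(f)_n^{\beta}$ are constants $c_{\alpha}$, and a case analysis on the value set of $(f)_n^{\beta}$ (containing $\{-1,1\}$, equal to $\{0,1\}$ or $\{-1,0\}$, or a singleton) either invokes the induction hypothesis or produces the quasi cross or quasi string directly. You instead characterize $U_0(n,q)\oplus U_1(n,q)$ once and for all as the space of functions of the form $\sum_i h_i(x_i)$; your eigenvalue computation for zero-sum single-coordinate functions and your kernel/dimension count are both correct, but note that the equality of dimensions uses the fact that the multiplicity of $\lambda_1(n,q)$ equals $n(q-1)$ --- standard (see e.g.\ \cite{BCN}) but an external input that the paper's inductive proof does not need. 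Granting that characterization, your normalization-and-integrality argument is clean and correct: the spread $\max f-\min f$ is at most $2$ while each essential coordinate contributes an integer at least $1$, so at most two coordinates are essential, and the cases $|E|=0,1,2$ match constant, quasi string, and quasi cross exactly (your value tables, including the nonemptiness of $A$ and $B$, check out). What your approach buys: it is non-inductive, it makes transparent why at most two coordinates can be essential, and it generalizes immediately (values of diameter $d$ force at most $d$ essential coordinates); as a side benefit, writing the quasi cross as $1_A(x_i)-1_B(x_j)$ is cleaner than the paper's decomposition into a sum of two quasi strings, which as stated is off by a factor of $2$. What the paper's approach buys: it runs entirely on the same restriction machinery (Lemma~\ref{lem_smalred}) that drives the rest of the paper, so no spectral multiplicity facts are needed.
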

\begin{proof}
Let us prove this lemma by induction on $n$. If $n=1$, then it is
easy to see that $f$ is either a constant or the $(A,B,i)$-quasi
string.

Let us prove the induction step. Fix  $\beta \in {\cal A}$. Lemma
\ref{lem_smalred} implies that
$f_{n}^{\alpha}-f_{n}^{\beta}\in{U_{0}(n,q)}$ for any $\alpha\in
{\cal A}\setminus \beta$. Hence $f_{n}^{\alpha}-f_{n}^{\beta}$ is
a constant, which we denote
 by $c_{\alpha}$, so we have that
\begin{equation}\label{constant}
 f_{n}^{\alpha}\equiv f_{n}^{\beta}+c_{\alpha}.
\end{equation}

Now, we consider three cases. Given a function $g$ by $E(g)$ we
denote the set of its values.

\textbf{Case $1$.} In this case we suppose that
$\{-1,1\}\subseteq{E(f_{n}^{\beta})}$. Using the equality
(\ref{constant}), we see that $E(f_{n}^{\alpha})$ contains numbers
$c_{\alpha}-1$ and $c_{\alpha}+1$. Since
$E(f)\subseteq\{-1,0,1\}$, we have $c_{\alpha}=0$ for any
$\alpha\in{\mathcal{A}\setminus\{\beta\}}$. So
$f_{n}^{\alpha}\equiv f_{n}^{\beta}$ for any
$\alpha\in{\mathcal{A}\setminus\{\beta\}}$. By Lemma
\ref{lem_smalred} we obtain $f_{n}^{\beta}\in{U_{0}(n-1,q)\oplus
U_{1}(n-1,q)}$. Then using the induction assumption for
$f_{n}^{\beta}$, we finish the proof in this case.

\textbf{Case $2$.} Let $E(f_{n}^{\beta})$ be $\{0,1\}$ or
$\{-1,0\}$. Without loss of generality, we assume that
$E(f_{n}^{\beta})=\{0,1\}$. Using (\ref{constant}) and the fact
that $E(f)\subseteq\{-1,0,1\}$, we obtain that
$c_{\alpha}\in\{-1,0\}$ for any
$\alpha\in{\mathcal{A}\setminus\{\beta\}}$. Denote
$C=\{\alpha\in{\mathcal{A}\setminus\{\beta\}}:c_{\alpha}=-1\}$.

 By Lemma \ref{lem_smalred} we obtain $f_{n}^{\beta}\in{U_{0}(n-1,q)\oplus U_{1}(n-1,q)}$. Using the induction assumption for $f_{n}^{\beta}$, we obtain that $f_{n}^{\beta}$ is the $(A,B,i)$-quasi string. Moreover, $B=\emptyset$ because $E(f_{n}^{\beta})=\{0,1\}$. Then $f$ is the $(A,C,i,n)$-quasi cross due to the definition of the cross.

\textbf{Case $3$.} Let $f_{n}^{\beta}$ be a constant. Using
(\ref{constant}), we obtain that $f_{n}^{\alpha}$ is a constant
for any $\alpha\in{\mathcal{A}\setminus\{\beta\}}$. Then either
$f$ is a constant, or $f$ is the $(A,B,n)$-quasi string.

Let us now prove the sufficiency part. Obviously, the
$(A,B,1)$-quasi string belongs to $U_0(1,q)\oplus U_{1}(1,q)$
because the latter is the space of all real-valued functions on
$H(1,q)$. The $(A,B,i)$-quasi string in $H(n,q)$ is obtained from
the $(A,B,1)$-quasi string in $H(1,q)$ by adding $n-1$
nonessential coordinate positions, so by Lemma \ref{Lem_noness} it
is in $U_0(n,q)\oplus U_1(n,q)$.

 On the other hand, it is easy to see that the
$(A,B,i,j)$-quasi cross is the sum of the
$(A,\overline{A},i)$-quasi string and the
$(\overline{B},B,j)$-quasi string that belong to $U_0(n,q)\oplus
U_1(n,q)$, so the $(A,B,i,j)$-quasi cross is also in
$U_0(n,q)\oplus U_1(n,q)$.
\end{proof}

 It is easy to see that the $(A,B,i)$-quasi string is in
 $U_1(n,q)$ iff $|A|=|B|$. If $x$ is a zero of the $(A,B,i,j)$-quasi cross then $x$ is $i$-adjacent to $|A|$ vertices
$y$ such that $f(y)=1$ and $|A|$ vertices $y$ such that $f(y)=-1$.
If $x$ is such that $f(x)$ is positive (or negative) then $x$ is
$i$-adjacent or $j$-adjacent to $|A|+q-|B|-2$ vertices $y$ that
have positive (or negative) value $|B|+q-|A|-2$. Again, taking
into account Lemma \ref{Lem_noness}, the $(A,B,i,j)$-quasi cross
is $\lambda_1(n,q)$-eigenfunction iff $|A|=|B|$. When $|A|=|B|$
the $(A,B,i)$-quasi string is called the the $(A,B,i)$-{\it
string} and the $(A,B,i,j)$-quasi cross is called the
$(A,B,i,j)$-{\it cross}. The previous lemma implies  the
following.

\begin{lemma}\label{Lem_charl1}
Let $f$ be an arbitrary function from ${\cal A}^n$ to
$\{0,-1,1\}$. The function $f$ is $\lambda_1(n,q)$-eigenfunction
iff it is the $(A,B,i,j)$-cross or the $(A,B,i)$-string or the
all-zero function.
\end{lemma}

\subsection{Equitable 2-partitions of H(n,q) with eigenvalue $\lambda_2(n,q)$}

\begin{theorem}\label{the_1}
 Let $(C,\overline{C})$ be an equitable partition
of $H(n,q)$ with the quotient matrix $S$ that has eigenvalue
$\lambda_2(n,q)$. If $(\chi_C)_{k}^{\alpha'}-(\chi_C)_k^{\alpha}$
is the
$(A,B,i,j)$-cross %, $i,j\in {\cal N}$, $A,B\subset {\cal A}$
and there is $s\in {\cal N}\setminus \{i,j,k\}$ that is essential
for $(\chi_C)_{k}^{\alpha}$ or $(\chi_C)_{k}^{\alpha'}$ then
$(C,\overline{C})$ is obtained by Construction B.
\end{theorem}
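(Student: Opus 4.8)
The plan is to extract strong structural information from the hypothesis that one of the difference functions is an $(A,B,i,j)$-cross while a third coordinate $s\notin\{i,j,k\}$ is essential. First I would fix the cross data: by Corollary \ref{coro_1} the function $g=(\chi_C)_k^{\alpha'}-(\chi_C)_k^\alpha$ is a $\lambda_1(n-1,q)$-eigenfunction with values in $\{0,1,-1\}$, and by Lemma \ref{Lem_charl1} being a cross forces $|A|=|B|$. Since $g$ records where the two slices $x_k=\alpha$ and $x_k=\alpha'$ of $C$ differ, the cross pattern tells me exactly how membership in $C$ flips as we move in coordinates $i$ and $j$: on the region $x_i\in A,\,x_j\notin B$ the slice $\alpha'$ lies in $C$ and $\alpha$ does not, with the opposite on $x_i\notin A,\,x_j\in B$, and agreement elsewhere. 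Because $C$ is a $1$-orthogonal array (Proposition \ref{prop_orth_arr}), the counts $|\{x\in C:x_i=\beta\}|$ are all equal, and I would feed these balance constraints back into the cross description.

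**Propagating the cross to other coordinate pairs.**

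The essential step is to show that the coordinate $s$ cannot behave independently. Since $s$ is essential for one slice, there exist $s$-adjacent vertices on that slice with different $\chi_C$-values, so $s$ shows up as an essential coordinate of the whole partition. I would then apply Corollary \ref{coro_1} with the pair $(k,\alpha),(k,\alpha')$ replaced by slicing along $s$ (or along $i$ and along $j$), obtaining further $\{0,1,-1\}$-valued $\lambda_1$-eigenfunctions that, by Lemma \ref{Lem_charl1}, must again be crosses, strings, or zero. Comparing the supports of these several eigenfunctions — they must be mutually compatible because they all come from the single set $C$ — should force every essential coordinate to participate symmetrically: each essential coordinate pairs up into crosses with the others, and the alphabet splits into two halves $A$ and $\overline A=B$ of equal size in every essential coordinate. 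The upshot I am aiming for is that exactly four coordinates are essential and that, after identifying each essential alphabet with a two-element set via the bipartition $(A,\overline A)$, the indicator $\chi_C$ depends only on these four binary coordinates.

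**Reducing to $H(4,2)$ and recognizing Construction B.**

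Once the partition is seen to depend only on four essential coordinates each carrying a balanced two-part alphabet partition, I would invoke the alphabet-lifting machinery of Section \ref{sec:const}: collapsing each $A_0=A,\,A_1=\overline A$ realizes $(C,\overline C)$ as the lift of a reduced equitable $2$-partition of $H(4,2)$ with eigenvalue $\lambda_2(4,2)=0$ and quotient matrix forced (by the cross/orthogonal-array counts) to be the all-$2$ matrix $\bigl(\begin{smallmatrix}2&2\\2&2\end{smallmatrix}\bigr)$. A reduced $2$-partition of $H(4,2)$ with this quotient matrix has every vertex adjacent to exactly two vertices in each cell, so each cell is a disjoint union of induced cycles covering all $16$ vertices; the essentiality of all four coordinates together with the cross structure should rule out shorter cycle decompositions and pin it down to two induced $8$-cycles, which is precisely the configuration underlying Construction B (Corollary, construction B).

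**Anticipated main obstacle.**

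The hard part will be the propagation step: showing that the single cross in the $(i,j)$-plane, together with essentiality of $s$, rigidly forces the crosses on all other coordinate pairs and caps the number of essential coordinates at four. The difficulty is that a priori $s$ could enter through a string rather than a cross, or the various $\lambda_1$-eigenfunctions arising from different slicings could overlap in complicated ways; ruling these out requires carefully intersecting the supports of several $\{0,1,-1\}$-eigenfunctions and repeatedly using the orthogonal-array equal-count condition of Proposition \ref{prop_orth_arr} to eliminate the string and higher-essential-coordinate possibilities. I expect the bookkeeping to reduce to a finite case analysis on how the essential coordinates of the two slices $(\chi_C)_k^\alpha$ and $(\chi_C)_k^{\alpha'}$ interact with $\{i,j\}$.
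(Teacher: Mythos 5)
Your proposal identifies the right ingredients (Corollary~\ref{coro_1}, Lemma~\ref{Lem_charl1}, the orthogonal-array counts of Proposition~\ref{prop_orth_arr}, and slicing $C$ along several coordinates), and its overall spirit matches the paper's proof, but it is a plan rather than a proof: every step that carries the actual content of the theorem is left as an aspiration. The phrases ``should force every essential coordinate to participate symmetrically,'' ``the upshot I am aiming for is that exactly four coordinates are essential,'' and ``should rule out shorter cycle decompositions'' are precisely the claims that need proof, and you explicitly defer them as the ``anticipated main obstacle.'' Nothing in the proposal shows why coordinate $s$ cannot enter via a string, why no fifth coordinate can be essential, why the alphabet must split into two \emph{equal} halves in the $s$th (and $k$th) coordinate, or why $\chi_C$ depends only on which half each of the four coordinates lies in. These are not routine verifications: they are where the eigenvalue hypothesis $\lambda_2(n,q)$ enters quantitatively.

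For comparison, the paper resolves exactly these points by a concrete finite reconstruction rather than by an abstract compatibility argument. First, from essentiality of $s$ it picks $s$-adjacent vertices $x^{0000}\in C$, $x^{0001}\in\overline{C}$ and reconstructs the partition on an explicit binary subcube spanned by the four coordinates $n,i,j,s$ (Lemma~\ref{lem_h42}); the key move is that if the subcube pattern were anything else, the difference function $(\chi_C)_j^{\gamma'}-(\chi_C)_j^{\gamma}$ obtained by slicing along $j$ would have at least three essential coordinates, contradicting Lemma~\ref{Lem_charl1}. Second --- and this is the step entirely absent from your outline --- it counts neighbors of $x^{0000}$, $x^{0001}$, $x^{0110}$, $x^{0111}$ in $C$ and $\overline{C}$ to get $S_{12}\geq\max\{|A|+|A'|,\,2q-|A|-|A'|\}$ and $S_{21}\geq\max\{q+|A|-|A'|,\,q-|A|+|A'|\}$, and then uses the eigenvalue identity $S_{12}+S_{21}=2q$ to force $|A|=|A'|=q/2$ and $S_{12}=S_{21}=q$ with both bounds tight; tightness is what determines the partition globally in the coordinates $i,j,s$ (the rules (\ref{eq_4}), (\ref{eq_41})). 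Third, for every other symbol $\beta$ it analyzes $f=(\chi_C)_n^{\alpha}-(\chi_C)_n^{\beta}$ and shows via Lemma~\ref{Lem_charl1} that $f$ is either the same $(A,A,i,j)$-cross or identically zero, so each slice $x_n=\beta$ copies one of the two reference slices, with exactly $q/2$ symbols of each type --- this is what makes coordinate $n$ itself carry a balanced bipartition and exhibits $C$ as Construction~B directly, without needing your final cycle-decomposition argument in $H(4,2)$ (which, as stated, also has a gap: you would still have to exclude reduced all-$2$ quotient-matrix partitions of $H(4,2)$ other than two $8$-cycles, and to justify that the lifted structure has the all-$2$ quotient matrix at all). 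In short: correct toolkit, same general strategy as the paper, but the theorem's proof is exactly the part you postponed.
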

\begin{proof}
Without restriction of generality, suppose that $k=n$. Moreover,
 we have $|A|=|B|$, so up to an isomorphism of the Hamming graph we assume that
 $A=B$ and
$(\chi_C)_{k}^{\alpha'}-(\chi_C)_k^{\alpha}$ is the
$(A,A,i,j)$-cross. Then we have following the properties.

\hspace{-115mm}\begin{equation}\label{eq_1}
 \{x:x_n=\alpha', x_i\in A, x_j \in \overline{A}\},
\{x:x_n=\alpha, x_i\in \overline{A}, x_j \in A\}\subset
C,\end{equation}

\begin{equation}\label{eq_11}
\{x:x_n=\alpha, x_i\in A, x_j \in \overline{A}\}, \{x:x_n=\alpha',
x_i\in \overline{A}, x_j \in A\}\subset \overline{C}.
\end{equation}

\begin{gather}
 \mbox{Let } x \mbox{ and } y  \mbox{ be }n\mbox{-adjacent vertices } \mbox{ such that }  x_n=\alpha, y_n=\alpha',\nonumber\\
 \noindent x_i=y_i\in A,x_j=y_j\in A.  \mbox{ Then x and y are both in C or
not.}\label{eq_2}
\end{gather}

\begin{gather}
 \mbox{Let } x \mbox{ and } y  \mbox{ be }n\mbox{-adjacent
vertices } \mbox{ such that } x_n=\alpha, y_n=\alpha',\nonumber\\
 \noindent x_i=y_i\in \overline{A}, x_j=y_j\in \overline{A}. \mbox{ Then x
and y are both in C or not.}\label{eq_21}
\end{gather}

By conditions of the theorem there is a pair of $s$-adjacent
vertices $x^{0000}\in C$ and $x^{0001}\in\overline{C}$ such that
their $n$th positions are both $\alpha'$ or $\alpha$ and their
$i$th and $j$th positions are in
 $A$ or not simultaneously by properties (\ref{eq_1}), (\ref{eq_11}).
By properties (\ref{eq_2}) and (\ref{eq_21}) the essential
coordinate positions for $(\chi_C)_n^{\alpha}$ and
$(\chi_C)_n^{\alpha'}$ coincide, so we can assume that
$x^{0000}_n=x^{0001}_n=\alpha'$.

Moreover, we assume that both
$x^{0000}_i=x^{0001}_i=\beta',x^{0000}_j=x^{0001}_j=\gamma'$ are
in $\overline{A}$ (the case when they are in $A$ is proven
analogously).
  W.l.o.g. up to an automorphism of $H(n,q)$ we assume that
$\alpha$ and $x^{0001}_s=\delta\in A$,
 $\alpha'$ and $x^{0000}_s=\delta'\in \overline{A}$. By $\beta$ and $\gamma$ denote any two
 elements of $A$, so we have that $\alpha,\beta,\gamma,\delta \in A$,
$\alpha',\beta',\gamma',\delta' \in \overline{A}$.

 Denote by $x^{a_1
a_2a_3a_4}$ the vertex obtained from $x^{0000}$ by changing its
$n$th position from $\alpha'$ to $\alpha$ iff $a_1=1$, its $i$th
position from $\beta'$ to $\beta$ iff $a_2=1$, its $j$th position
from $\gamma'$ to $\gamma$ iff $a_3=1$, its $s$th position from
$\delta'$ to $\delta$ iff $a_4=1$. The graph spanned by $\{x^a:a
\in \{0,1\}^4\}$ is $H(4,2)$. We now show that the partition
$(C,\overline{C})$ could be reconstructed on the subgraph.

\begin{lemma}\label{lem_h42}
 $x^a\in C$ iff $a\in \{0000,0100,0101,1000,1010,1011,0111,1111\}$
\end{lemma}

\begin{proof}

Note that by (\ref{eq_2}) and (\ref{eq_21}) for any $a\in
\{0,1\}^3$ the vertex $x^{0a}$ is in $C$ iff $x^{1a}$ is in $C$
when $x_i^{0a}, x_j^{0a}\in A$ or $x_i^{0a},x_j^{0a}\in
\overline{A}$. Using this and taking into account (\ref{eq_1}) and
(\ref{eq_11}) we have that
\begin{equation}\label{eq_C}x^a\in C \mbox{ if } a\in
\{0000,0100,0101,1000,1010,1011\},\end{equation}
\begin{equation}\label{eq_C1}x^a\in \overline{C} \mbox{ if }
a\in\{0001,0010,0011,1001,1100,1101\}.\end{equation}

We now show that $x^{0110}$ is in $\overline{C}$ and $x^{0111}$ is
in $C$. Denote by $x^{a_1a_2a_4}$ the vertex obtained from
$x^{a_1a_2a_3a_4}$ by deleting its $j$th coordinate position.

Consider the values of the function
$(\chi_C)_{j}^{\gamma'}-(\chi_C)_{j}^{\gamma}$ on
$\{x^{a_1a_2a_3}:a_l\in\{0,1\},l=1,2,3\}$, $\gamma'=x_j^{0000}$,
$\gamma=x_j^{0010}$. From (\ref{eq_C}) and (\ref{eq_C1}) we have
that
$$((\chi_C)_{j}^{\gamma'}-(\chi_C)_j^{\gamma})(x^{000})=\chi_C(x^{0000})-\chi_C(x^{0010})=1,$$
$$((\chi_C)_{j}^{\gamma'}-(\chi_C)_j^{\gamma})(x^{001})=\chi_C(x^{0001})-\chi_C(x^{0011})=0,$$
$$((\chi_C)_{j}^{\gamma'}-(\chi_C)_j^{\gamma})(x^{100})=\chi_C(x^{1000})-\chi_C(x^{1010})=0.$$

If $x^{0110}$ is in $C$ then
$$((\chi_C)_{j}^{\gamma'}-(\chi_C)_j^{\gamma})(x^{010})=\chi_C(x^{0100})-\chi_C(x^{0110})=0.$$
If $x^{0111}$ is not in $C$ then
$$((\chi_C)_{j}^{\gamma'}-(\chi_C)_j^{\gamma})(x^{011})=\chi_C(x^{0101})-\chi_C(x^{0111})=1.$$

Therefore, if $x^{0110}$ is in $C$ or $x^{0111}$ is in
$\overline{C}$ then $(\chi_C)_{j}^{\gamma'}-(\chi_C)_{j}^{\gamma}$
has at least three essential coordinate positions. However, by
Corollary \ref{coro_1}
$(\chi_C)_{j}^{\gamma'}-(\chi_C)_{j}^{\gamma}$ is a
$\lambda_1(n-1,q)$-eigenfunction of $H(n-1,q)$, so it has not more
then two essential coordinates according to the characterization
in Lemma \ref{Lem_charl1}. Therefore we have that $x^{0110}\in
\overline{C}$ and $x^{0111}\in C$ and then $x^{1110}\in
\overline{C}$ and $x^{1111}\in C$ since changing $n$th position to
$\alpha$ preserves the property of being in $C$ in this case by
properties (\ref{eq_2}) and (\ref{eq_21}).

\end{proof}

\begin{lemma}
We have that $S_{12}=S_{21}=q$, $|A|=q/2$ and the following holds
up to an isomorphism of $H(n,q)$.

\begin{equation}\label{eq_4}\mbox{ A vertex }x\mbox{ such that }x_i,x_j\in A \mbox{
is in }C \mbox{ iff } x_s\in \overline{A},
\end{equation}
\begin{equation}
\label{eq_41}\mbox{ A vertex }x\mbox{ such that }x_i,x_j\in
\overline{A}\mbox{ is in }C \mbox{ iff } x_s\in A. \end{equation}

\end{lemma}

\begin{proof}

Let $A'$ be $\{x'_s:x'\in \overline{C} \mbox{ is s-adjacent to
}x^{0000}\}$. Lemma \ref{lem_h42} holds for any vertex
$x^{0001}\in \overline{C}$ that is s-adjacent to $x$, so we have
that
\begin{equation}\label{eq_0}{\cal A}\setminus A'=\{x_s:x\in \overline{C} \mbox{ is s-adjacent to
}x^{0111}\}.\end{equation}

We now evaluate the numbers of the neighbors of $x^{0000}$,
$x^{0111}$ from $\overline{C}$. By the definition of equitable
partition, it is $S_{12}$, where $(S_{ij})_{i,j=1,2}$ is the
quotient matrix of the partition $(C,\overline{C})$. Consider the
vertices obtained from $x^{0000}\in C$ by changing its $j$th
symbol to an element from $A$.
 By property (\ref{eq_11}) these vertices are not in $C$, so there are at least
$|A|+|A'|$ vertices from $\overline{C}$, that are $s$- or
$j$-adjacent to $x^{0000}$. By property (\ref{eq_11}) the vertices
obtained from $x^{0111}\in C$ by changing its $i$th symbol to an
element of $\overline{A}$ are in $\overline{C}$. The above
combined with (\ref{eq_0}) gives that

\begin{equation}\label{eq_3}S_{12}\geq max\{|A|+|A'|, 2q-|A'|-|A|\}.\end{equation}

Now count the neighbors of $x^{0001}$ and $x^{0110}$ in $C$. For
$x^{0001}$ there are $|A|$ $i$-neighbors and $q-|A'|$
$s$-neighbors from $C$. For $x^{0110}$ there are $q-|A|$
$j$-neighbors and $|A'|$ $s$-neighbors from $C$. Then we have that
\begin{equation}\label{eq_31}S_{21}\geq max\{q+|A|-|A'|,q-|A|+|A'|\}.\end{equation}

Since $\lambda_2(n,q)=n(q-1)-2q=S_{11}-S_{21}$ is an eigenvalue of
the equitable partition with the quotient matrix $S$ we have that
$S_{12}+S_{21}=2q$, which combined with the inequalities
(\ref{eq_3}) and (\ref{eq_31}) implies that $|A'|=|A|=q/2$ (in
below up to an automorphism of $H(n,q)$ we assume that $A'=A$) and
$S_{12}=S_{21}=q$. Moreover, the bound (\ref{eq_3}) is attained.
This implies that the neighbors of $x^{0000}$ from $\overline{C}$
as exactly those $j$- and $s$-neighbors of $x^{0000}$ counted
while obtaining (\ref{eq_3}). So all $l$-adjacent vertices of
$x^{0000}$, $l\in {\cal N}\setminus \{j,s\}$ are in $C$.
Analogously, the vertices from $C$ adjacent to the vertex
$x^{0001}$ are exactly those $i$-and $s$-adjacent vertices to
$x^{0001}$ counted while obtaining bound (\ref{eq_31}). We
conclude that all vertices that are $l$-adjacent vertices to
$x^{0001}$, $l\in {\cal N}\setminus \{i,s\}$ are in
$\overline{C}$. The consideration above holds for any two
$s$-adjacent vertices $x^{0000}\in C$ and $x^{0001}\in
\overline{C}$ and we know their neighbors from $\overline{C}$ and
$C$ respectively, so (\ref{eq_4}) and (\ref{eq_41}) hold.

\end{proof}

By the previous Lemma, (\ref{eq_1}) and (\ref{eq_2}) the partition
$(C,\overline{C})$ is reconstructed on the following set $\{x\in
{\cal A}^n:x_n=\alpha,\alpha'\}$. We now show that for any
$\beta\in {\cal A}$ the vertices of $\{x\in {\cal
A}^n:x_n=\beta\}\cap C$ are obtained by "copying" their
n-neighbors  either from $\{x:x_n=\alpha\}\cap C$ or from
$\{x:x_n=\alpha'\}\cap C$.

 Consider $f=(\chi_C)^{\alpha}_n-(\chi_C)^{\beta}_n$, $\beta\in
{\cal A}\setminus \{\alpha,\alpha'\}$. Then we have that
\begin{equation}\label{eq_l}
f(y)=0\mbox{ if } y_i, y_j\in A\mbox{ or } y_i, y_j\in
\overline{A}.
\end{equation}
We now show that $f$ (and therefore $(\chi_C)^{\beta}_n$) could be
reconstructed in only two ways.

From (\ref{eq_1}), we have that $f(y)\leq 0$ if $y_i\in A$,
$y_j\in \overline{A}$. We show that $f(y)$ is a constant for all
$y \in {\cal A}^{n-1}$, $y_i\in A$, $y_j\in \overline{A}$. Let $y$
be such that $f(y)=-1$, $y_i\in A, y_j \in \overline{A}$. Then
from (\ref{eq_l}) $i$ and $j$ are essential coordinates for $f$
and by Lemma \ref{Lem_charl1} we see that $f$ is the
$(A',B',i,j)$-cross. Moreover, if $z$ is $i$- or $j$-adjacent to
$y$, $f(z)=0$, $z_i\in A,z_j\in \overline{A}$ then by (\ref{eq_l})
a nonzero $y$ of the function $f$ is adjacent to at least $q+1$
zeros of $f$, which contradicts the definition of the
$(A',B',i,j)$-cross. Therefore we have that $A'=B'=A$ and from
(\ref{eq_1}) and (\ref{eq_2}) we reconstruct $C$ as follows:
\begin{equation}\label{rec_1}
\mbox{n-adjacent vertices of } \{x:x_n=\beta\}\mbox{ and }
\{x:x_n=\alpha'\} \mbox{ are both in }C \mbox{ or not}
\end{equation}

Let all $y \in {\cal A}^{n-1}$ such that $y_i\in A, y_j\in
\overline{A}$ be zeros of $f$. Then from (\ref{eq_l}) we have that
$f(y)=0$ when $y_i\in A$ or $y_j\in \overline{A}$ which implies
that $f$ is either the all-zero function or has two nonessential
coordinates by Lemma \ref{Lem_charl1}. However, in the latter case
a vertex $y$, $y_i\in A$, $y_j\in\overline{A}$ is adjacent only to
zeros of $f$, which contradicts the definition of the cross.
Therefore $f$ is the all-zero function and we reconstruct $C$ as
follows:
\begin{equation}\label{rec_2}
\mbox{n-adjacent vertices of } \{x:x_n=\beta\}\mbox{ and }
\{x:x_n=\alpha\} \mbox{ are both in }C \mbox{ or not}
\end{equation}

We now show that there are exactly $q/2$ elements $\beta\in {\cal
A}$ that satisfy (\ref{rec_1}). Let $z\in C$ be such that
$z_n=\alpha'$, $z_i\in A, z_j\in \overline{A}$. Then by
(\ref{eq_2}) the vertex $z$ is not $l$-adjacent to vertices of
$\overline{C}$, $l\in {\cal N}\setminus i,j,n$ and by (\ref{eq_4})
and (\ref{eq_41}) the vertex $z$ is $i$- or $j$-adjacent to
exactly $q/2$ vertices of $\overline{C}$. Therefore, $z$ is
$n$-adjacent with exactly $q/2=S_{12}-q/2$ vertices of
$\overline{C}$. In other words there are exactly $q/2$ elements
$\beta\in {\cal A}$ that satisfy (\ref{rec_1}).

 Finally, from (\ref{eq_4}),
(\ref{eq_41}), (\ref{rec_1}), (\ref{rec_2}) we see that  $x$ is in
$C$ iff
$$(x_n,x_i,x_j,x_s)\in A\times A\times A\times \overline{A}\cup
\overline{A}\times A\times A\times\overline{A}\cup
 A\times A\times\overline{A}\times\overline{A}\cup$$
$$ A\times A\times\overline{A}\times A\cup
 A\times \overline{A}\times\overline{A}\times A\cup
 \overline{A}\times \overline{A}\times\overline{A}\times A\cup$$
 $$ \overline{A}\times \overline{A}\times A\times \overline{A}\cup
\overline{A}\times \overline{A}\times A\times A, $$

i.e. the partition $(C,\overline{C})$ is obtained from
Construction $B$.

\end{proof}

The main result of this section is

\begin{theorem}
The only  reduced equitable
2-partitions of $H(n,q)$ with eigenvalue $\lambda_2(n,q)$ are: \\
\begin{enumerate}
    \item Reduced equitable 2-partitions of $H(2,q)$ and $H(3,q)$
    \item If $q$ is even, the equitable 2-partition of $H(4,q)$ from
alphabet liftings of two induced 8-cycles in $H(4,2)$
(Construction B).
    \item Equitable 2-partitions of $H(n,q)$ obtained by the permutation
switchings. (Construction A).
\end{enumerate}

\end{theorem}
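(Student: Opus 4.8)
The plan is to argue by induction on $n$, turning the problem into a question about the slice differences $(\chi_C)_k^{\alpha}-(\chi_C)_k^{\beta}$, which Corollary \ref{coro_1} and Lemma \ref{Lem_charl1} describe completely. The base cases $n\le 3$ are exactly item~1, so I would assume $n\ge 4$ and that $(C,\overline C)$ is reduced. First I record the numerology that controls everything: from $\lambda_2(n,q)=S_{11}-S_{21}$ and $S_{11}+S_{12}=n(q-1)$ one gets $S_{12}+S_{21}=2q$, which later pins the cross parameter to $|A|=q/2$ (forcing $q$ even) and fixes the quotient entries. The combinatorial skeleton is then: for every coordinate $k$ and all $\alpha,\beta\in{\cal A}$ the slice difference $(\chi_C)_k^{\alpha}-(\chi_C)_k^{\beta}$ is, by Corollary \ref{coro_1} and Lemma \ref{Lem_charl1}, a cross, a string, or the all-zero function.

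The main dichotomy is whether Theorem \ref{the_1} is ever applicable. If for some $k$ some slice difference is an $(A,B,i,j)$-cross and one of the two slices $(\chi_C)_k^{\alpha}$, $(\chi_C)_k^{\alpha'}$ has an essential coordinate $s\notin\{i,j,k\}$, then Theorem \ref{the_1} immediately gives that $(C,\overline C)$ is obtained by Construction~B; since Construction~B has exactly four essential coordinates, reducedness forces $n=4$ and $q$ even, which is item~2. The whole remaining problem is the complementary Case~II, in which no cross difference has a far essential coordinate, i.e.\ whenever a slice difference is a cross in $\{i,j\}$ its two slices are supported on $\{i,j\}$.

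In Case~II I would classify each direction $k$ by examining the base differences $d_\alpha=(\chi_C)_k^{\alpha}-(\chi_C)_k^{\alpha_0}$. The elementary budget fact is that the difference of two \emph{nonzero} strings in distinct coordinates already attains a value of absolute value $2$, which is impossible because every slice difference lies in $\{0,\pm 1\}$ by Corollary \ref{coro_1}. Hence any direction all of whose nonzero base differences are strings has them all in a single coordinate $i(k)$ — a \emph{pivot direction}, giving $\chi_C(x)=\Phi_k(x_{\hat k})+\Psi_k(x_k,x_{i(k)})$. A direction that does carry a cross difference is, by the Case~II hypothesis, one in which the crossed slices stay supported on the cross's two coordinates; tracking the supports of the remaining slices via the same at-most-two-essential-coordinate budget shows they remain supported on few coordinates and fan out over essentially distinct coordinates — a \emph{fan direction}.

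It remains to organize these local pictures globally and reconstruct Construction~A. For a reduced partition with $n\ge 4$ no direction can have all its slices supported on one common pair, since that would confine $\chi_C$ to three coordinates and leave the rest nonessential (contradicting reducedness via Lemma \ref{Lem_noness}); this rules out the collapsed configurations and is the key step toward forcing a single distinguished coordinate $k_0$ that is the unique fan direction and is simultaneously the pivot $i(k)=k_0$ of every other direction. Reading off, for each symbol $x_{k_0}$, the coordinate on which the corresponding fan slice is supported partitions ${\cal A}$ into the blocks $A_1,\dots,A_{n-1}$ of Construction~A; Proposition \ref{prop_orth_arr} and Proposition \ref{Prop_ep_2} then identify the induced partition on each pair $\{k_0,\cdot\}$ as one satisfying the clique-density condition, and undoing the switchings $\pi_i$ exhibits the underlying $H(2,q)$ partition, i.e.\ Construction~A (item~3). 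I expect the global coherence step to be the main obstacle: promoting the per-direction fan/pivot classification to one common special coordinate $k_0$, ruling out multiple fan directions and the ``all pivot'' (matching-like) configurations, and verifying that the reconstructed data satisfy exactly the density condition of Proposition \ref{Prop_ep_2} rather than some weaker constraint that would enlarge the family.
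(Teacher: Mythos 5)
Your overall skeleton does match the paper's: the dichotomy on whether Theorem \ref{the_1} is ever applicable (giving Construction B, hence $n=4$ and $q$ even), the observation that two nonzero string differences in distinct coordinates would force a value of absolute value $2$ (impossible by Corollary \ref{coro_1}), and the use of reducedness to kill configurations supported on three coordinates. But the proposal stops exactly where the real work begins. The step you yourself flag as ``the main obstacle'' --- promoting the per-direction pivot/fan classification to a single distinguished coordinate $k_0$, ruling out multiple fan directions and the all-pivot configurations, and verifying that the reconstructed data satisfy the clique-density condition of Proposition \ref{Prop_ep_2} --- is not a deferrable technicality: it is the mathematical content of the theorem. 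Without it you have only shown that slice differences are strings or crosses with controlled supports, which does not yet pin down items 1--3 as the complete list. (Also, the announced induction on $n$ is never invoked anywhere in the sketch; the argument, in substance, is a direct case analysis, so that framing is spurious.)

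For comparison, the paper resolves precisely this step by working with a \emph{single} direction $n$ rather than classifying all directions, thereby avoiding your global-coherence problem altogether. It partitions the alphabet $\mathcal{A}$ into blocks, two symbols $\alpha,\beta$ lying in one block iff $(\chi_C)_n^{\alpha}$ and $(\chi_C)_n^{\beta}$ have the same essential coordinates. If there are at least two blocks, Lemma \ref{lem_twobl1} shows every slice has exactly one essential coordinate (string differences between distinct blocks are impossible --- essentially your observation --- and a cross with a far essential coordinate would give Construction B, which has only one block); reducedness then forces exactly $n-1$ blocks $A_1,\dots,A_{n-1}$, and a double-counting argument based on Proposition \ref{prop_orth_arr} shows that $C$ meets every relevant maximal clique in the exact proportion $S_{12}/2q$, which is the density condition of Proposition \ref{Prop_ep_2}, whence Construction A (Lemma \ref{lem_l8}). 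If there is one block, Lemma \ref{lem_l9} shows the partition has at most three essential coordinates (item 1) or is Construction B; the delicate sub-case there --- a string difference together with an extra essential coordinate, your ``matching-like'' configuration --- is eliminated by an explicit computation on a small subcube showing it degenerates into the cross case. These counting and sub-case arguments are exactly what your ``global coherence'' and ``density verification'' steps would require, and nothing in the sketch substitutes for them.
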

\begin{proof}

Consider $(\chi_C)^{\alpha}_n$, $\alpha\in {\cal A}$. Define the
partition of ${\cal A}$ into sets $A_1,\ldots,A_t$ that we call
blocks as follows: $\alpha$, $\beta$ are in one block iff
$(\chi_C)^{\alpha}_n$ and $(\chi_C)^{\beta}_n$ have the same sets
of essential coordinate positions. The proof follows from Lemmas
\ref{lem_l8} and \ref{lem_l9} below.

\begin{lemma}\label{lem_twobl1}
If there are at least two blocks, then for any $\alpha\in {\cal
A}$ the function $(\chi_C)_{n}^{\alpha}$ has exactly one essential
coodinate.
\end{lemma}
\begin{proof}
Let $\alpha$ and $\beta$ be in different blocks. Then by Lemma
\ref{Lem_charl1} $(\chi_C)_n^{\alpha}-(\chi_C)_n^{\beta}$ is
either the $(A,B,i)$-string or the $(A,B,i,j)$-cross.

The function $(\chi_C)_n^{\alpha}-(\chi_C)_n^{\beta}$ is not
$(A,B,i)$-string because $\alpha$ and $\beta$ are in different
blocks. Suppose the opposite. Obviously, $i$ is an essential
coordinate for both $(\chi_C)_n^{\alpha}$ and
$(\chi_C)_n^{\beta}$. Moreover, $l\in {\cal N}\setminus \{i,n\}$
is not an essential coordinate for
$(\chi_C)_n^{\alpha}-(\chi_C)_n^{\beta}$, so for any pair of
$l$-adjacent tuples $y$ and $y'$ of ${\cal A}^{n-1}$, we have that
$(\chi_C)_n^{\alpha}(y)-(\chi_C)_n^{\beta}(y)=(\chi_C)_n^{\alpha}(y')-(\chi_C)_n^{\beta}(y')$.
In other words, we have that
$(\chi_C)_n^{\alpha}(y)-(\chi_C)_n^{\alpha}(y')=(\chi_C)_n^{\beta}(y)-(\chi_C)_n^{\beta}(y')$,
so $l$ is an essential coordinate for both $(\chi_C)_n^{\alpha}$
and $(\chi_C)_n^{\beta}$ or not. We conclude that
$(\chi_C)_n^{\alpha}$ and $(\chi_C)_n^{\beta}$ are in one block.

Let $(\chi_C)_n^{\alpha}-(\chi_C)_n^{\beta}$ be $(A,B,i,j)$-cross.
If $(\chi_C)_n^{\alpha}$ or $(\chi_C)_n^{\beta}$ has an essential
coordinate $s\in {\cal N}\setminus \{i,j,n\}$, then by
 Theorem \ref{the_1} we have Construction B and only one block in
 this case. So, we conclude that $(\chi_C)_n^{\alpha}$ and
 $(\chi_C)_n^{\beta}$ have exactly one essential coordinate in
 $\{i,j\}$.

\end{proof}

\begin{lemma}\label{lem_l8}
If there are at least two blocks then $(C,\overline{C})$ is
obtained by Construction A.
\end{lemma}
\begin{proof} By Lemma \ref{lem_twobl1} the number of blocks is
greater then the number of the essential coordinate positions of
the partition by 1. Since the partition is reduced, the number of
essential coordinates is $n$ and $A_1,\ldots,A_{n-1}$ are the
blocks of the partition $(C,\overline{C})$. W.l.o.g. for any
$\alpha\in A_i$ $i$ is the essential coordinate for
$(\chi_C)_n^{\alpha}$.

Consider a vertex whose $n$th position is in $A_i$. Taking into
account Proposition \ref{prop_orth_arr} and because each of
$(\chi_C)_{n}^{\alpha}$ has exactly one essential coordinate  by
Lemma \ref{lem_twobl1}, $|C\cap K_i|=S_{21}/2$, $|\overline{C}\cap
K_i|=S_{12}/2$ for the maximum clique $K_i$ consisting of the
vertex and its $i$-neighbors. Let the vertex be from $C$
($\overline{C}$ respectively). Then since $i$ is the only
essential coordinate for $(\chi_C)_n^{\alpha}$, $\alpha\in A_i$,
the vertex is not $j$-adjacent to any vertices of $\overline{C}$
($C$ respectively ) for $j \in {\cal N}\setminus \{i,n\}$ and
therefore is $n$-adjacent to exactly $S_{12}/2$ vertices of
$\overline{C}$ ($S_{21}/2$ vertices of $C$). So we see that any
maximum clique consisting of pairwise $n$-adjacent vertices of
$H(n,q)$ contains exactly $S_{12}/2$ vertices of $\overline{C}$
and $S_{21}/2$ vertices of $C$.

Let $K_n$ and $K'_n$ be two maximum cliques consisting of pairwise
$n$-adjacent vertices of $H(n,q)$. For $i\in {\cal N}\setminus n$
denote by $L_i$ ($L'_i$) those vertices of $K_n$ and ($K'_n$
respectively) that have their $n$th coordinate in $A_i$. From the
shown above we have that

\begin{equation}\label{eq_cl_subcli}|K_n\cap \overline{C}|=|K'_n\cap \overline{C}|=\sum_{i\in
{\cal N}\setminus n} |L_i\cap
\overline{C}|=S_{12}/2.\end{equation}

 We now prove that for any $i \in {\cal N}\setminus n$ we have
 that
$$|L_i\cap \overline{C}|=|L_i'\cap \overline{C}|=S_{12}|L_i|/2q.$$

 It is sufficient to prove the equality above when
for some $\beta\in {\cal A}$
 the vertices of $K'_n$ is obtained from the vertices of $K_n$ (and therefore $L_j$ and $L'_j$ for any $j\in {\cal N}\setminus \{i,n\}$) by
 changing their $i$th coordinate position to $\beta$. We see that $i$-adjacent pairs of vertices between $L_j$ and $L'_j$ is a
perfect matching. This, combined with the fact that $i$ is not
essential for the restriction of $(C,\overline{C})$ to ${\cal
A}^{n-1}\times A_j$ gives $|L_j\cap \overline{C}|=|L'_j\cap
\overline{C}|$. Then from (\ref{eq_cl_subcli}) we have that
$|L_i\cap \overline{C}|=|L'_i\cap \overline{C}|$.

For $j$ in ${\cal N}\setminus n$ consider the restriction of the
partition $(C,\overline{C})$ to the subgraph induced by ${\cal
A}^{n-1}\times A_j$. Each maximum clique of the subgraph
consisting of pairwise $j$-adjacent vertices contains $S_{12}/2$
vertices of $\overline{C}$ and any maximal clique of the subgraph
consisting of pairwise $n$-adjacent vertices contains has exactly
$l_j$ vertices of $\overline{C}$.  Since the coordinates from
${\cal N}\setminus \{j,n\}$ are nonessential for the restriction,
double counting of $\overline{C}$ in ${\cal A}^{n-1}\times A_j$
gives that $l_j/|A_j|=S_{12}/2q$ is a constant regardless of $j$.
We have shown that the partition $(C,\overline{C})$ is equally
distributed by all maximum cliques $K_j$, $j\in {\cal N}\setminus
n$, whose vertices have $n$th coordinate in $A_j$ and by the sets
$x'\times A_j$, for all $x'\in {\cal A}^{n-1}$, $j\in {\cal N}$.
We conclude that $(C,\overline{C})$ is obtained by Construction A
by the transposition of coordinate positions 1 and $n$.
\end{proof}

\begin{lemma}\label{lem_l9}
If there is one block then $(C,\overline{C})$ has not more then 3
essential coordinates or $(C,\overline{C})$ is obtained by
Construction $B$.
\end{lemma}
\begin{proof}
Let $\alpha$, $\beta$ be distinct elements of ${\cal A}$. Consider
$(\chi_C)_{n}^{\alpha}- (\chi_C)_{n}^{\beta}$. By Lemma
\ref{Lem_charl1} and Theorem \ref{the_1} we have the following
cases:
\begin{itemize}
    \item $(\chi_C)_{n}^{\alpha}- (\chi_C)_{n}^{\beta}$ is the
     $(A,B,i,j)$-cross and there is $s\in{\cal N}\setminus
    \{i,j,n\}$ that is essential for $(\chi_C)_{n}^{\alpha}$
    and $(\chi_C)_{n}^{\beta}$. Then $(C,\overline{C})$
     is obtained by Construction B.
     \item $(\chi_C)_{n}^{\alpha}- (\chi_C)_{n}^{\beta}$ is the
     $(A,B,i,j)$-cross, the essential coordinates
     of $(\chi_C)^\alpha_n$ and $(\chi_C)^\beta_n$ are in $\{i,j\}$. Since $\alpha$ and $\beta$ are from the same block, $i$ and $j$ are the essential coordinates
     for both $(\chi_C)^\alpha_n$ and $(\chi_C)^\beta_n$. The partition $(C,\overline{C})$ has exactly three
     essential coordinates in this case.
     \item $(\chi_C)_{n}^{\alpha}- (\chi_C)_{n}^{\beta}$ is the
     $(A,B,i)$-string.
\end{itemize}
Let $(\chi_C)_{n}^{\alpha}- (\chi_C)_{n}^{\beta}$ be the
$(A,B,i)$-string. From the definition of the string we have that
$x\in C$ if $x_n=\alpha,x_i\in A$ or $x_n=\beta,x_i\in B$ and
$x\in \overline{C}$ if $x_n=\beta,x_i\in A$ or $x_n=\alpha,x_i\in
B$.

%Moreover, a pair of n-adjacent vertices with $n$th coordinates
%being $\alpha$ and $\beta$ are both in $C$ or not.

Suppose that $s, s\neq i,n$ is an essential coordinate of
$(\chi_C)_{n}^{\alpha}$ and $(\chi_C)_{n}^{\beta}$ and there are
s-adjacent vertices for $x^{000}\in C$ and $x^{001}\in
\overline{C}$, $x_n^{000}=x_n^{001}=\alpha$,
$x_i^{000}=x_i^{001}=\tilde{\alpha}$. Fix $\alpha'\in A$ and
denote by $x^a$ the vertex which is obtained from $x^{000}$ by
changing its $n$th position to $\beta$ iff $a_1=1$ and $i$th
position to $\alpha'$ iff $a_2=1$ and $s$th position to
$x_s^{001}$ iff $a_3=1$. By the properties above we know that
$x^a\in C$ iff $a\in\{000,010,011,100\}$. Consider
$f=(\chi_C)_i^{\tilde{\alpha}}-(\chi_C)_i^{\alpha'}$ on the tuples
$\{x^{a_1a_3}:a_1,a_3\in \{0,1\}\}$, here $x^{a_1a_3}$ is obtained
from $x^{a_1a_2a_3}$ by deleting its $i$th position. We have that
$f(x^{00})=0, f(x^{01})=-1, f(x^{10})=1$,  which implies that $f$
is a cross and we are in the second case.

\end{proof}

\end{proof}

\noindent {\small {\bf Acknowledgments.} The reported study was
funded by RFBR according to the research project N 18-31-00126.}

\end{document}